\newtheorem{theorem}{Theorem}[section]
\newtheorem{prop}[theorem]{Proposition}
\newtheorem{lemma}[theorem]{Lemma}
\newtheorem{corollary}[theorem]{Corollary}
\theoremstyle{definition}
\newtheorem{definition}[theorem]{Definition}
\theoremstyle{remark}
\newtheorem{remark}[theorem]{Remark}
\numberwithin{equation}{section}
\DeclareMathOperator{\Hom}{Hom}
\DeclareMathOperator{\End}{End}
\DeclareMathOperator{\Ker}{Ker}
\DeclareMathOperator{\Ima}{Im}
\DeclareMathOperator{\dvol}{dvol}
\DeclareMathOperator{\supp}{supp}
\DeclareMathOperator{\tr}{tr}
\DeclareMathOperator{\id}{id}
\DeclareMathOperator{\Real}{Re}
\begin{document}
\title{Refined Kato inequalities  for harmonic fields on K\"{ahler} manifolds}

\author{Daniel Cibotaru}
\address{Instituto de Matem\'{a}tica e Estat\'{i}stica, Universidade Federal Fluminense, Niteroi, RJ, Brasil}
\email{daniel@mat.uff.br}

\author{Peng Zhu}
\address{School of Mathematical Science, Yangzhou University, Yangzhou, Jiangsu 225002, P. R. China}
\email{zhupeng2004@yahoo.com.cn}

\subjclass[2000]{Primary 53B05, 53B20, 53C55, 58J05}
\thanks{The authors are supported by the CAPES(PNPD) program.\\
    $~~~~~~~~~~~$The second author was partially supported by NSF
Grant (China) 11071208, Tian Yuan Fund of Mathematics
(China)11026116 and Yangzhou University fund 2010CXJ001}

\begin{abstract}{We obtain a refined Kato inequality for closed and coclosed differential $(p,q)$ forms  on  a  K\"{a}hler manifold.}
\end{abstract}

\maketitle

\section{Introduction}
 Kato inequalities have been shown to be important technical tools which are used to prove analytico-geometric results. In the articles \cite {Br,CaGaHe}, Branson, Calderbank, Gauduchon and Herzlich study injectively elliptic Stein-Weiss operators and they show that the sections in the kernel of such an operator satisfy improved Kato inequalities with constants that can be determined from representation theoretic data. 
 
 In \cite[Theorem 6.3(ii)]{CaGaHe} (the case $k=1$) the authors prove such an inequality for differential forms in the kernel of the Hodge-de Rham operator $d+d^*$ on a Riemannian manifold. This result is also stated in \cite{Wa} as Lemma 4.2, which contains an omission, namely the condition that the degree has to be at most half the dimension of the manifold (which is what the author actually needs). 
 
 The purpose of this article is to further refine this  Kato inequality for  forms of type  $(p,q)$  on a K\"{a}hler  manifold. We did this in Theorem \ref{Kahler} for all values of $p$ and $q$ except for $p=q$. The most important consequence of Theorem \ref{Kahler} is, in our view,  a Kato inequality for  holomorphic forms  on all K\"{a}hler manifolds (see Corollary \ref{holforms}).  
 
  Our presentation follows closely the methods of Branson, Calderbank, Gauduchon and Herzlich. In fact, we will present a proof of  the mentioned result from  \cite{CaGaHe} avoiding as much as possible the representation theoretic technicalities. It is this proof that suggested the improvement in the K\"{a}hler case.
  
  In the case of  complete K\"{a}hler manifolds, Kong, Li and Zhou in \cite{KoLiZh}   and Lam in \cite{La} showed that an $L^2$ harmonic
1-form $\omega$ has to satisfy
$$|d|\omega||\leq\frac{1}{\sqrt{2}}|\nabla\omega|.$$  In Corollary \ref{refineK}  we reprove this, with  the methods introduced here. In \cite{Wa2}, Wang proves an inequality about {\it real}, closed and coclosed $(1,1)$ forms   with a constant sharper  than in the Riemannian case.

We would like to thank Prof. Detang Zhou for suggesting the problem to us and for useful conversations. In an earlier version, we wrongly claimed that the proof of the refined Kato inequality in the Riemannian case has not appeared before. We would like to  thank  X. Wang for pointing it out  to us.

\section{Stein-Weiss operators}\label{SteinWeiss}
Let $M$ be a Riemannian manifold of dimension $n$, not necessarily compact.  We will call {\it harmonic fields} the forms $\omega\in C^{\infty}(M;\Lambda^k T^*M)$ that satisfy $(d+d^*)\omega=0$. If $M$ is compact then the harmonic fields coincide with the harmonic forms, i.e. solutions of $\Delta \omega=0$.

We introduce the main type of operators. 
Let $E$ be a  vector space endowed with an inner product. We suppose that $E$ is a real representation of $SO(n)$ given by $\rho:SO(n)\rightarrow GL(E)$ and let $F\hookrightarrow \mathbb{R}^n\otimes E$ be subrepresentation of the  canonical representation tensored with $E$ and we denote by $\Pi$ the orthogonal projection
\[ \Pi: \mathbb{R}^n\otimes E\rightarrow F.
\] 
We will use the same letters to denote the projection of vector bundles over $M$,
\[ \Pi:T^*M\otimes E\rightarrow F.
\]
The Levi-Civita connection on the frame bundle $P_{SO}M$ of $M$ induces a connection $\nabla$ on $E$. 
\begin{definition} A Stein-Weiss (gradient) operator is a first order differential operator $L:\Gamma(E)\rightarrow \Gamma(F)$ obtained as the composition
\[  \Gamma(E)\rightarrow \Gamma(T^*M\otimes E)\rightarrow \Gamma(F), \quad\quad L:= \Pi\circ \nabla.
\]
\end{definition}
\begin{remark} A beautiful study of Stein-Weiss operators can be found in \cite{Br2} where T. Branson classifies those operators which are injectively elliptic (see Definition \ref{injell}). 
\end{remark}
Since a Stein-Weiss operator is essentially built from two objects: an orthogonal projection morphism   of $SO(n)$ representations and a connection  on the manifold, by a slight abuse of terminology and notation we will talk about the composition of  these  two objects  instead of the more lengthy expression "the composition of the connection with the associated projection of vector bundles".

\begin{definition} The rescaled Hodge-de Rham operator is the operator which on $k$-forms acts as
\[ 
\frac{1}{\sqrt{k+1}}d+\frac{1}{\sqrt{n+1-k}}d^*.
\]

\end{definition}
Notice that the harmonic fields can be seen as the solutions of the rescaled Hodge-de Rham equation.

\begin{prop}\label{rescSW} The rescaled Hodge-de Rham operator  is a Stein-Weiss operator. \end{prop}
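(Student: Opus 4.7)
The plan is to exhibit an $SO(n)$-equivariant subspace $F\subset \mathbb{R}^n\otimes\Lambda^k(\mathbb{R}^n)^*$, together with an isometric identification $F\cong \Lambda^{k+1}(\mathbb{R}^n)^*\oplus \Lambda^{k-1}(\mathbb{R}^n)^*$, such that the associated Stein--Weiss operator $\Pi_F\circ\nabla$ coincides with the rescaled Hodge--de Rham operator. The natural candidates for the two summands of $F$ are the isotypic components isomorphic to $\Lambda^{k+1}$ and $\Lambda^{k-1}$, embedded via the adjoints of exterior multiplication and interior multiplication.

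Concretely, let $\epsilon\colon \mathbb{R}^n\otimes\Lambda^k\to \Lambda^{k+1}$, $\alpha\otimes\omega\mapsto \alpha\wedge\omega$, and $\delta\colon \mathbb{R}^n\otimes\Lambda^k\to \Lambda^{k-1}$, $\alpha\otimes\omega\mapsto \iota_{\alpha^\sharp}\omega$; both are $SO(n)$-equivariant. Working in an orthonormal basis, I would establish the three algebraic identities
\begin{equation*}
\epsilon\epsilon^{*}=(k+1)\,\id_{\Lambda^{k+1}},\qquad \delta\delta^{*}=(n-k+1)\,\id_{\Lambda^{k-1}},\qquad \epsilon\delta^{*}=0.
\end{equation*}
The first two are straightforward counts of preimages of a basis element, and the last follows immediately from $\epsilon\delta^{*}(\beta)=\sum_{j}e^{j}\wedge e^{j}\wedge \beta=0$. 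Taking the adjoint of the last identity gives $\delta\epsilon^{*}=0$, so the images of $\epsilon^{*}$ and $\delta^{*}$ are mutually orthogonal inside $\mathbb{R}^n\otimes \Lambda^k$.

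Define $F:=\epsilon^{*}(\Lambda^{k+1})\oplus\delta^{*}(\Lambda^{k-1})$, and use the rescaled embeddings
\begin{equation*}
\iota_{1}:=\tfrac{1}{\sqrt{k+1}}\,\epsilon^{*}\colon \Lambda^{k+1}\to F,\qquad \iota_{2}:=\tfrac{1}{\sqrt{n-k+1}}\,\delta^{*}\colon \Lambda^{k-1}\to F
\end{equation*}
as isometric identifications (they are isometric by the first two identities). The orthogonal projection onto $F$ splits as $\Pi_{F}=\iota_{1}\iota_{1}^{*}+\iota_{2}\iota_{2}^{*}=\tfrac{1}{k+1}\epsilon^{*}\epsilon+\tfrac{1}{n-k+1}\delta^{*}\delta$. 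Since $\epsilon\circ\nabla=d$ and $\delta\circ\nabla=-d^{*}$ on $k$-forms, the $\epsilon^{*}(\Lambda^{k+1})$-component of $\Pi_{F}\nabla\omega$ is $\tfrac{1}{k+1}\epsilon^{*}d\omega=\iota_{1}\bigl(\tfrac{1}{\sqrt{k+1}}d\omega\bigr)$, which under the identification $\iota_{1}$ corresponds to $\tfrac{1}{\sqrt{k+1}}d\omega\in\Lambda^{k+1}$; symmetrically the $\delta^{*}(\Lambda^{k-1})$-component corresponds to $\tfrac{-1}{\sqrt{n-k+1}}d^{*}\omega$ (the sign is harmless and can be absorbed in the identification). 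This is exactly the rescaled Hodge--de Rham operator.

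There is no deep obstacle: the argument is algebraic bookkeeping once the two eigenvalues $k+1$ and $n-k+1$ of $\epsilon\epsilon^{*}$ and $\delta\delta^{*}$ are identified, and indeed these eigenvalues are precisely what force the rescaling factors $1/\sqrt{k+1}$ and $1/\sqrt{n+1-k}$. The only point that merits care is tracking how the projection onto $F$ composed with the inverse isometry produces the claimed coefficients; alternatively one can simply check the norm identity $|\Pi_{F}\nabla\omega|^{2}=\tfrac{1}{k+1}|d\omega|^{2}+\tfrac{1}{n-k+1}|d^{*}\omega|^{2}$, which matches the squared norm of the rescaled Hodge--de Rham operator because $d\omega$ and $d^{*}\omega$ live in orthogonal bundles.
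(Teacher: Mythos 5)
Your proposal is correct and follows essentially the same route as the paper: your rescaled embeddings $\iota_1=\tfrac{1}{\sqrt{k+1}}\epsilon^*$ and $\iota_2=\tfrac{1}{\sqrt{n-k+1}}\delta^*$ are exactly the paper's intertwiners $\theta_1$ and $\theta_2$ (up to the sign you correctly absorb), and the identities $\epsilon\epsilon^*=(k+1)\id$, $\delta\delta^*=(n-k+1)\id$, $\epsilon\delta^*=0$ carry the same content as the paper's verification that $\Pi_i\circ\theta_i=\id$ with orthogonal images. Nothing essential is missing.
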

\begin{proof} We will show that 
\[ \Pi: \mathbb{R}^n\otimes\Lambda^k\mathbb{R}^n\rightarrow\Lambda^{k+1}\mathbb{R}^n\oplus\Lambda^{k-1}\mathbb{R}^n, \quad\quad \Pi(\eta\otimes\omega)=\left(\frac{1}{\sqrt{k+1}}\eta\wedge \omega, -\frac{1}{\sqrt{n-k+1}} \iota_{\eta^*}\omega\right)
\]
is a morphism of $SO(n)$ representations, where $\iota_{\eta^*}$ represents contraction with the metric-dual to $\eta$. In fact, $\Pi$ is the orthogonal projection of the tensor product representation  $\mathbb{R}^n\otimes\Lambda^k\mathbb{R}^n$ to a direct sum of two subrepresentations. 

Let $\{e_i, ~i=1\ldots n\}$ be  an orthogonal basis of $\mathbb{R}^n$. Let
 \begin{equation}\label{thetalabel1} \theta_{1}: \Lambda^{k+1}\mathbb{R}^n\rightarrow\mathbb{R}^n\otimes\Lambda^k\mathbb{R}^n, \end{equation}
 \[\theta_1(v_1\wedge \ldots \wedge v_{k+1})=\frac{1}{\sqrt{k+1}}\sum_{i=1}^{k+1}(-1)^{i-1}v_i\otimes v_1\wedge\ldots \hat v_i \wedge\ldots v_{k+1}\]
and let 
\begin{equation}\label{thetalabel2} \theta_{2}: \Lambda^{k-1}\mathbb{R}^n\rightarrow\mathbb{R}^n\otimes\Lambda^k\mathbb{R}^n,\quad\quad \theta_2(\omega)=-\frac{1}{\sqrt{n-k+1}}\sum_{i=1}^{n} e_i\otimes (e_i\wedge \omega).
\end{equation}
be two maps. It is easy to check that they are injective morphisms of $SO(n)$-representations.  The first one is obviously so, while the second can be described as the composition of  
\[ \mathbb{R}^n\otimes\mathbb{R}^n\otimes\Lambda^{k-1}\mathbb{R}^n\rightarrow\mathbb{R}^n\otimes\Lambda^{k}\mathbb{R}^n,\quad\quad\quad \xi\otimes\eta\otimes\omega\mapsto \xi\otimes\eta\wedge\omega
\] with
\[\Lambda^{k-1}\mathbb{R}^n\rightarrow \mathbb{R}^n\otimes\mathbb{R}^n
\otimes\Lambda^{k-1}\mathbb{R}^n,\quad \quad\quad \omega\mapsto -\frac{1}{\sqrt{n-k+1}}\left(\sum_{i=1}^{n}e_i\otimes e_i\right)\otimes\omega.\]
 Note that if $A\in SO(n)$ 
\[ \sum_{i=1}^{n} A e_i\otimes Ae_i =\tr(A^TA)\sum_{i=1}^{n} e_i\otimes e_i=\sum_{i=1}^{n} e_i\otimes e_i.
\]
The presence of  the constants $1/\sqrt{k+1}$ and $1/\sqrt{n-k+1}$ is a reminder of the fact that we are dealing with {\it isometric} monomorphisms of representations. The following relations hold
\[ \Pi_1\circ \theta_{1}=\id_{\Lambda^{k+1}\mathbb{R}^n},\quad\quad\quad\quad \Pi_2\circ\theta_{2}=\id_{\Lambda^{k-1}\mathbb{R}^n}.
\]
Moreover  we have that
\[ \Ker{\Pi_1}= \theta_{1}( \Lambda^{k+1}\mathbb{R}^n)^\perp\quad\quad\mbox{and}\quad\quad \Ker{\Pi_2}= \theta_{2}( \Lambda^{k-1}\mathbb{R}^n)^\perp. 
\]
Indeed, using the identities from Lemma \ref{linalg} (below) one can prove the $\subset$ inclusions which is enough because $\Pi_1$ and $\Pi_2$ are surjective.

Another easy application of Lemma \ref{linalg} shows that the images of $\theta_1$ and $\theta_2$ are orthogonal from which we deduce that $\Pi$ is the orthogonal projection onto the $SO(n)$-invariant subspace \linebreak  $\Lambda^{k+1}\mathbb{R}^n\oplus\Lambda^{k-1}\mathbb{R}^{n-1}\hookrightarrow\mathbb{R}^n\otimes\Lambda^k\mathbb{R}^n$.

 We combine what we have just said with the well known result (see Proposition 1.22 and Proposition 2.8 in \cite{BGV}) that the Hodge-de Rham operator  is the composition of the Clifford multiplication
\[ c:T^*M\otimes \Lambda^*T^*M\rightarrow \Lambda^*T^*M,\quad\quad c(\eta\otimes\omega)=(\eta\wedge\omega,-i_{\eta^*}(\omega))
\]
with the Levi-Civita connection  and we are done.
\end{proof}
The following lemma was used in the proof above.
\begin{lemma}\label{linalg}  The following identities hold:
\begin{enumerate}
\item[(a)] $\langle\xi\wedge\omega,\eta_1\wedge\ldots\wedge\eta_{k+1} \rangle=\displaystyle\sum_{i=1}^{k+1}(-1)^{i-1}\langle \xi,\eta_i\rangle\cdot\langle\omega,\eta_1\wedge\ldots \hat{\eta}_i\ldots\wedge\eta_{k+1}\rangle$, \\ where $\xi,\eta_i\in\mathbb{R}^n,~\omega\in\Lambda^k\mathbb{R}^n;$
\item[(b)] $\langle\iota_{\xi}(\omega),\theta\rangle=\displaystyle\sum_{i=1}^n\langle\xi,e_i\rangle\cdot\langle\omega, e_i\wedge\theta\rangle$, where $\xi\in\mathbb{R}^n$, $\theta\in\Lambda^{k-1}\mathbb{R}^n$ and $\omega\in\Lambda^{k}\mathbb{R}^n$.
\end{enumerate}
\end{lemma}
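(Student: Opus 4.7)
My plan is to prove (a) directly from the Gram determinant definition of the inner product on the exterior algebra, and to deduce (b) from (a) together with the standard adjointness between wedge and contraction.

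For (a), I would first use bilinearity of both sides in $\omega$ to reduce to the case where $\omega=v_1\wedge\cdots\wedge v_k$ is decomposable. Recalling that
\[
\langle u_1\wedge\cdots\wedge u_m,\, w_1\wedge\cdots\wedge w_m\rangle=\det\bigl(\langle u_i,w_j\rangle\bigr),
\]
the left-hand side then becomes the determinant of the $(k{+}1)\times(k{+}1)$ matrix $M$ whose first row is $(\langle\xi,\eta_j\rangle)_j$ and whose remaining rows are $(\langle v_i,\eta_j\rangle)_j$. Laplace expansion along the first row gives
\[
\det M=\sum_{j=1}^{k+1}(-1)^{j-1}\langle\xi,\eta_j\rangle\,\det M_{1j},
\]
and the minor $\det M_{1j}$ is, again by the Gram formula, exactly $\langle v_1\wedge\cdots\wedge v_k,\,\eta_1\wedge\cdots\hat{\eta}_j\cdots\wedge\eta_{k+1}\rangle$. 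Extending by linearity in $\omega$ delivers (a).

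For (b), I would use the orthonormal expansion $\xi=\sum_i\langle\xi,e_i\rangle e_i$ to rewrite the right-hand side as
\[
\sum_i\langle\xi,e_i\rangle\langle\omega,e_i\wedge\theta\rangle=\Bigl\langle\omega,\Bigl(\sum_i\langle\xi,e_i\rangle e_i\Bigr)\wedge\theta\Bigr\rangle=\langle\omega,\xi\wedge\theta\rangle.
\]
Thus (b) is equivalent to the identity $\langle\iota_\xi\omega,\theta\rangle=\langle\omega,\xi\wedge\theta\rangle$. To establish this, I would again reduce by linearity to decomposable $\omega=v_1\wedge\cdots\wedge v_k$, apply the explicit formula $\iota_\xi(v_1\wedge\cdots\wedge v_k)=\sum_{i=1}^{k}(-1)^{i-1}\langle\xi,v_i\rangle\,v_1\wedge\cdots\hat{v}_i\cdots\wedge v_k$ to expand the left-hand side via the Gram formula, and apply part (a) to expand $\langle\xi\wedge\theta,\omega\rangle=\langle\omega,\xi\wedge\theta\rangle$ on the right. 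Both sides then coincide term by term.

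There is no substantive obstacle here; the statement is a routine algebraic identity on $\Lambda^\bullet\mathbb{R}^n$. The only care required is keeping the alternating signs in the Laplace expansion consistent with the signs in the explicit formula for $\iota_\xi$ on decomposable elements, which is why it is convenient to do (a) first and use it as a black box in (b).
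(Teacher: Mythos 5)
Your proposal is correct and follows essentially the same route as the paper: part (a) is the Laplace expansion of the Gram determinant along the row corresponding to $\xi$, and part (b) is reduced to part (a) via the explicit formula for $\iota_\xi$ on decomposables (the paper just specializes to basis elements $\xi=e_p$, $\theta=e_I$ where you keep general vectors and invoke bilinearity). No gaps.
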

\begin{proof} Let $I\subset\{1,\ldots, n\}$ be a subset with
\begin{itemize}
\item [(a)] $k+1$ or 
\item[(b)] $k-1$
\end{itemize} elements and let $e_I:=\wedge_{i\in I}e_i$. By linearity, it is enough to prove the identities for $\eta_1\wedge\ldots\wedge\eta_{k+1}=e_I$ and $\theta=e_I$, respectively. Also it is enough to consider $\omega=\alpha_1\wedge\ldots\wedge \alpha_k$.

In this situation, the number on the left hand side of the first identity is the $(k+1)\times (k+1)$ minor formed by taking the $I$-columns in the $n\times (k+1)$ matrix having the entries of $\xi$ on the first row and $\alpha_1,\ldots ,\alpha_k$ on the  next ones. The identity itself is stating the well known fact that this minor can be computed  as an  alternating sum of the relevant entries of $\xi$ multiplied with the corresponding $k\times k$ minors with entries from the matrix made of $\alpha_1,\ldots,\alpha_k$.

For the second identity, we further simplify by letting $\xi:=e_p$. Then we have to prove that
\[ \sum_{i=1}^{k} (-1)^{i-1}\langle e_p,\alpha_i \rangle\cdot\langle\alpha_1\wedge\ldots \hat{\alpha}_i\ldots\wedge \alpha_{k},e_I\rangle=\langle\alpha_1\wedge\ldots\wedge\alpha_k,e_p\wedge e_I\rangle, 
\]
which is nothing but the first identity for $\xi=e_p$ and $\omega=e_I$.
\end{proof}

A natural question is whether  the above rescaling of the Hodge-de Rham operator is the only one that turns it into a  Stein-Weiss operator. This is true up to some signs. 

\begin{lemma}\label{linalg2} Let $\sigma:V\rightarrow W$ be a linear map between inner product spaces (real or complex). Let  $\alpha:W\hookrightarrow V$  be an isometry such that $a\sigma\circ \alpha=\id_{W}$ for some constant $a$ and $\sigma(v)=0$ for all $v\in\alpha(W)^\perp$.  Suppose there exists another constant $b$ and an  isometry $\beta:W\hookrightarrow V$ such that $b\sigma\circ\beta=\id_{W}$ and $\sigma(v)=0$ for all $v\in\beta(W)^\perp$. Then $b=\mu a$ where $|\mu|=1$.
\end{lemma}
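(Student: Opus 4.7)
The plan is to observe that the two hypotheses together pin down the kernel of $\sigma$, which forces $\alpha(W)$ and $\beta(W)$ to coincide; once this is done, comparing $\alpha$ and $\beta$ yields a unitary on $W$ that must be a scalar.

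First, I would identify $\Ker\sigma$ from the data. The condition $a\sigma\circ\alpha=\id_W$ implies that $\sigma$ is injective on $\alpha(W)$, while the second condition says $\sigma$ vanishes on $\alpha(W)^\perp$. Together with $V=\alpha(W)\oplus\alpha(W)^\perp$, this gives $\Ker\sigma=\alpha(W)^\perp$. The same argument applied to $\beta$ gives $\Ker\sigma=\beta(W)^\perp$, so $\alpha(W)^\perp=\beta(W)^\perp$ and therefore $\alpha(W)=\beta(W)$.

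Next I would factor $\beta$ through $\alpha$. Since $\alpha\colon W\to\alpha(W)$ is a bijective isometry and $\beta(W)=\alpha(W)$, the composition $U:=\alpha^{-1}\circ\beta\colon W\to W$ is well defined, and as a composition of isometries it is itself an isometry of $W$ (so $|\det U|=1$ if one prefers, but all we need is unitarity). By construction $\beta=\alpha\circ U$, so
\[
\id_W \;=\; b\,\sigma\circ\beta \;=\; b\,\sigma\circ\alpha\circ U \;=\; \frac{b}{a}\,U,
\]
which gives $U=(a/b)\,\id_W$.

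Finally, assuming $W\neq 0$ (otherwise the statement is vacuous), the fact that $U$ is an isometry of $W$ forces the scalar $a/b$ to have modulus one. Writing $\mu=b/a$, we then get $b=\mu a$ with $|\mu|=1$, as desired. There is no serious obstacle here: the only point requiring a moment of care is the step $\Ker\sigma=\alpha(W)^\perp$, which is precisely where both hypotheses on $\alpha$ (and separately on $\beta$) are used.
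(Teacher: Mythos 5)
Your proof is correct, and it takes a somewhat different route from the paper's. The paper views $A=a\alpha\circ\sigma$ and $B=b\beta\circ\sigma$ as orthogonal projections of $V$, writes $\beta=T\circ\alpha$ for an orthogonal transformation $T$ of all of $V$, and then extracts $|b|=|a|$ from the algebraic identities $B=B^*$ and $B^2=B$. You instead pin down $\Ker\sigma=\alpha(W)^\perp=\beta(W)^\perp$ directly from the hypotheses, conclude $\alpha(W)=\beta(W)$, and reduce everything to the isometry $U=\alpha^{-1}\circ\beta$ of $W$, which the identity $b\sigma\circ\beta=\id_W$ forces to equal the scalar $(a/b)\id_W$; unitarity of $U$ then gives $|a/b|=1$. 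Your version is more elementary -- it avoids both the extension of $\beta\circ\alpha^{-1}$ to an orthogonal map of $V$ and the adjoint/idempotency computation -- and it yields the slightly stronger conclusion $\beta=(a/b)\alpha$, i.e., the two isometries themselves differ by a unimodular scalar, which is in fact how the lemma is used (uniqueness of the Stein--Weiss normalization up to sign or phase). The paper's projection-operator formulation has the merit of making the statement ``$a\sigma$ is the orthogonal projection onto $\alpha(W)$'' explicit, but your argument is complete as written; the only points deserving a word are the (contextually harmless) use of $V=\alpha(W)\oplus\alpha(W)^\perp$, which requires finite dimension or closedness of $\alpha(W)$, and the degenerate case $W=0$, which you correctly set aside.
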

\begin{proof} The conditions in the lemma express the fact that $a\sigma$ and $b\sigma$ are orthogonal projections onto $\alpha(W)$ and $\beta(W)$, respectively. Let $A,B: V\rightarrow V$ be the orthogonal projections seen as endomorphisms of $V$.
\[A:=a\alpha\circ\sigma, \quad\quad\quad\quad B:=b\beta\circ\sigma.\]
Now, there exists an orthogonal transformation $T:V\rightarrow V$ such that $\beta=T\circ\alpha$, hence
\[ B=\frac{b}{a}T\circ A
\] The relation $B=B^*$ implies that $\frac{b}{a}A\circ T=\frac{\bar{b}}{\bar{a}}T^*\circ A$ which fed into 
\[ B^2=\left(\frac{b}{a}\right)^2T\circ (A\circ T)\circ A=B
\] 
gives $\frac{|b|^2}{|a|^2} A^2=\frac{|b|^2}{|a|^2}A=B$. Hence the image of $B$ is the same as the image of $A$ and since they are orthogonal projections we must have $A=B$ and so $|b|^2=|a|^2$.
\end{proof}
\begin{corollary} The operators $\pm 1/\sqrt{k+1}d$ and $\pm1/\sqrt{n-k+1}d^*$ are the only multiples of $d$ and $d^*$ which are Stein-Weiss.
\end{corollary}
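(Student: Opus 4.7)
The plan is to reduce the corollary to a direct application of Lemma \ref{linalg2}, taking $\sigma$ to be the pointwise symbol of $d$ or $d^*$. Recall from the end of the proof of Proposition \ref{rescSW} the factorizations $d = \sigma_1 \circ \nabla$ and $d^* = \sigma_2 \circ \nabla$, where $\sigma_1(\eta \otimes \omega) = \eta \wedge \omega$ and $\sigma_2(\eta \otimes \omega) = -\iota_{\eta^*}\omega$ are $SO(n)$-equivariant linear maps from $\mathbb{R}^n \otimes \Lambda^k \mathbb{R}^n$ to $\Lambda^{k+1}\mathbb{R}^n$ and $\Lambda^{k-1}\mathbb{R}^n$ respectively. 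That same proof supplies the ``$(\alpha,a)$''-side of Lemma \ref{linalg2}: the isometric embeddings $\theta_1,\theta_2$ of (\ref{thetalabel1}), (\ref{thetalabel2}) together with the constants $a = 1/\sqrt{k+1}$, $1/\sqrt{n-k+1}$ satisfy $a\,\sigma_i \circ \theta_i = \Pi_i \circ \theta_i = \id$ and $\ker \sigma_i = \ker \Pi_i = \theta_i(\Lambda^{k\pm 1}\mathbb{R}^n)^\perp$.

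Next I would observe that if $\lambda d = \Pi \circ \nabla$ for an orthogonal projection $\Pi$ onto an isometrically embedded subrepresentation $F \hookrightarrow \mathbb{R}^n \otimes \Lambda^k \mathbb{R}^n$, realized by an isometric embedding $\beta$, then $\Pi$ and $\lambda \sigma_1$ coincide as pointwise bundle morphisms. The reason is that the value of $\nabla \omega$ at a point can be prescribed arbitrarily by varying $\omega$ locally (a standard cut-off construction), so equality of the composed operators forces equality of the pointwise factors. Consequently $b\,\sigma_1 \circ \beta = \id$ with $b = \lambda$, and $\ker \sigma_1 = \beta(F)^\perp$, which is precisely the ``$(\beta,b)$''-side of Lemma \ref{linalg2}. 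An identical argument handles any Stein-Weiss multiple $\mu d^*$ via $\sigma_2$.

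A single invocation of Lemma \ref{linalg2} for each operator then yields $|\lambda| = 1/\sqrt{k+1}$ and $|\mu| = 1/\sqrt{n-k+1}$, which is the corollary. I do not anticipate any genuine obstacle: the substance of the argument has already been extracted into Proposition \ref{rescSW} and Lemma \ref{linalg2}, and the corollary amounts to a notational unpacking of those results; the only step requiring slight care is the pointwise-localization that promotes equality of differential operators to equality of symbols.
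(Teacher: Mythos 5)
Your proposal is correct and follows exactly the route the paper intends: the corollary is stated immediately after Lemma \ref{linalg2} precisely so that it follows by feeding the data $(\theta_1, 1/\sqrt{k+1})$ and $(\theta_2, 1/\sqrt{n-k+1})$ from the proof of Proposition \ref{rescSW} into that lemma as the $(\alpha,a)$-side, with any other Stein-Weiss rescaling supplying the $(\beta,b)$-side. Your pointwise-localization step (promoting equality of first-order operators to equality of their symbols) is the only detail the paper leaves implicit, and it is handled correctly.
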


\section{kato constants}\label{Kato const}
The classical Kato inequality states that if $\phi$ is a section of the vector bundle $E$ then the following inequality holds away from the set  $\phi^{-1}(0)$
\[  |d|\phi||\leq |\nabla \phi|.
\]
The equality above takes place when  $\nabla \phi=\xi\otimes \phi$ for some $1$-form $\xi$.
\begin{definition} A refined Kato inequality is an inequality of the type
\[  |d|\phi|| \leq\alpha |\nabla \phi|
\] with $\alpha<1$, which will be called a refined Kato constant.
\end{definition}
The main insight of \cite{Br} and \cite{CaGaHe} is that if $\phi$ is a section in the kernel of an injectively elliptic Stein-Weiss operator $L$ then $\phi$ satisfies a  Kato inequality which  is stronger than the classical one. Moreover the refined Kato constant $\alpha$ depends only on the symbol of $L$.

We give now the relevant definitions. We will work in a slightly more general context than in the previous section, i.e. the bundle  $E$ will be complex and $L$ will be a complex differential operator. The Stein-Weiss operators we considered above are real operators. However by complexifying the representations one can obtain complex operators. They should really be called $SO(n)$ (real or complex) Stein-Weiss operators since they are associated to the $SO(n)$-frame bundle on the manifold. In the next section we will consider $U(n)$ Stein-Weiss operators when the manifold $M$ is K\"{a}hler. 
\begin{definition} The {\it symbol} of a complex differential operator $L:\Gamma(E)\rightarrow\Gamma(F)$ of order $k$ is the map
 \[  \sigma(L):T^*M\rightarrow \Hom(E,F)\quad\quad \sigma(L)(\xi\otimes\ldots \otimes\xi):=i^{k}\frac{1}{k!}[\ldots[L,f],\ldots, f],\quad\quad \xi:=df .\] The symbol of a real differential operator is the symbol of its complexification.
\end{definition}
The symbol of the Stein-Weiss operator $L:\Gamma(E)\rightarrow\Gamma(F)$ is the morphism of bundles
\[ i\Pi_F:T^*M\rightarrow \Hom(E, F).
\]
The symbol of the formal adjoint $L^*:\Gamma(F)\rightarrow\Gamma(E)$ is 
\[ \sigma(L^*):T^*M\rightarrow \Hom(F,E),\quad \quad \sigma(L^*)=\sigma(L)^*=-i\Pi_{F}^*\] and the symbol of $L^*L$ is $\Pi_F^*\Pi_F$.
\begin{definition}\label{injell} An operator $L$ is called \textit{injectively elliptic} if $L^*L$ is elliptic.  

Let $L$ be an operator of order $1$. A number $\epsilon$ is called a constant of ellipticity for $L^*L$ if the following relation holds
\[ \langle \sigma_{\xi}(L^*L)(v),v\rangle \geq \epsilon |\xi|^2|v|^2,\quad\quad \forall \xi\in T^*M, ~v\in E. 
\]
\end{definition}
If $L^*L$ is elliptic then the positivity of $\sigma(L^*L)$ implies that an ellipticity constant exists, at least locally. In the case of an injectively elliptic Stein-Weiss operator this constant is smaller than $1$ because of the next straightforward lemma and the fact that the Bochner Laplacian $\nabla^*\nabla$ has constant of ellipticity equal to $1$. 
\begin{lemma} Let $F^\perp$ be the orthogonal complement of $F$ in $\mathbb{R}^n\otimes E$ and let $L_{\perp}$ be the corresponding Stein-Weiss operator. Then 
\[ L^*L+L_{\perp}^*L_{\perp}=\nabla^*\nabla.\] 
\end{lemma}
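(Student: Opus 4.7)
My plan is to observe that the identity is essentially an algebraic identity between the symbols/projections, once we correctly interpret the adjoints.

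The first step is to unpack what the operators and their adjoints do. Write $\Pi_F: T^*M \otimes E \to F$ and $\Pi_{F^\perp}: T^*M \otimes E \to F^\perp$ for the orthogonal projections, so that $L = \Pi_F \circ \nabla$ and $L_\perp = \Pi_{F^\perp} \circ \nabla$. Taking formal adjoints, $L^* = \nabla^* \circ \Pi_F^*$ and $L_\perp^* = \nabla^* \circ \Pi_{F^\perp}^*$, where $\Pi_F^*: F \hookrightarrow T^*M \otimes E$ and $\Pi_{F^\perp}^*: F^\perp \hookrightarrow T^*M \otimes E$ are the isometric inclusions (these are pointwise bundle maps, so there are no boundary terms when transposing them).

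The second step is the purely pointwise linear-algebraic fact that, as endomorphisms of $T^*M \otimes E$,
\[
\Pi_F^* \Pi_F + \Pi_{F^\perp}^* \Pi_{F^\perp} = \id_{T^*M \otimes E},
\]
since the left-hand side is the sum of the orthogonal projectors onto $F$ and $F^\perp$, and $F \oplus F^\perp = \mathbb{R}^n \otimes E$ fibrewise.

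The third step is to combine the two: using associativity of composition and the $C^\infty(M)$-linearity of the projection operators,
\[
L^* L + L_\perp^* L_\perp = \nabla^* \Pi_F^* \Pi_F \nabla + \nabla^* \Pi_{F^\perp}^* \Pi_{F^\perp} \nabla = \nabla^* \bigl(\Pi_F^* \Pi_F + \Pi_{F^\perp}^* \Pi_{F^\perp}\bigr) \nabla = \nabla^* \nabla,
\]
which is the Bochner Laplacian. There is no real obstacle here; the only thing that could trip one up is the bookkeeping of distinguishing $\Pi_F$ as a surjection $T^*M \otimes E \to F$ from its adjoint $\Pi_F^*$ as an inclusion $F \hookrightarrow T^*M \otimes E$, so that the composition $\Pi_F^* \Pi_F$ is recognized as the self-adjoint orthogonal projector on $T^*M \otimes E$ whose range is $F$. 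Once this is sorted, the identity is immediate.
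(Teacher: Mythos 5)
Your proof is correct and is essentially the paper's own argument, which simply invokes the decomposition $\nabla = L \oplus L_{\perp}$ under the splitting $T^*M\otimes E = F\oplus F^{\perp}$; you have merely written out the resulting identity $\nabla^*(\Pi_F^*\Pi_F + \Pi_{F^\perp}^*\Pi_{F^\perp})\nabla = \nabla^*\nabla$ explicitly. The bookkeeping of adjoints and the pointwise projector identity are exactly the content the paper leaves implicit.
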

\begin{proof} One uses  the equality $\nabla=L\oplus L_{\perp}$.
\end{proof}

The connection between constants of ellipticity and refined Kato constants is provided by the following:
\begin{lemma} \label{elliptKato} Let $\mathbb{C}^n\otimes E=F\oplus F^{\perp}$ and let $\Pi$ and $\Pi^{\perp}$ be the corresponding orthogonal projections  onto $F$ and $F^\perp$ respectively. If $\phi\in \Ker(\Pi\circ \nabla)$ then
\[   |d |\phi| |\cdot| \phi|   \leq     | \nabla \phi | \cdot |\Pi^{\perp} (\xi_0\otimes\phi) |
\] 
for some one form $\xi_0$ which is real and of norm 1.
\end{lemma}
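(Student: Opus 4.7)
The key observation is that $\phi \in \Ker(\Pi \circ \nabla)$ means $\Pi(\nabla\phi) = 0$ pointwise, hence $\nabla\phi$ lies entirely in $F^{\perp}$, i.e.\ $\nabla\phi = \Pi^{\perp}(\nabla\phi)$. This is what will allow a projection to appear on the right--hand side of the desired inequality.

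First I would record the standard identity coming from the compatibility of $\nabla$ with the Hermitian inner product $h$ on $E$:
\[ |\phi| \, d|\phi| = \tfrac{1}{2} d|\phi|^{2} = \Real\, h(\nabla \phi, \phi), \]
where the right side is the real-valued $1$-form on $M$ obtained by pairing $\nabla\phi \in \Gamma(T^{*}M \otimes E)$ with $\phi$ in the $E$-factor. Away from $\phi^{-1}(0)$ this is just the computation underlying the classical Kato inequality.

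Next, at a point $x$ where $\phi(x) \neq 0$, pick a unit vector $v_{0} \in T_{x}M$ realizing $d|\phi|(v_{0}) = |d|\phi||$, and let $\xi_{0}$ be its metric-dual $1$-form, so that $\xi_{0}$ is real with $|\xi_{0}| = 1$. Evaluating the previous identity at $v_{0}$ rewrites its left-hand side as a pairing on $T^{*}M \otimes E$:
\[ |\phi| \cdot |d|\phi|| = \Real\, h(\nabla_{v_{0}}\phi, \phi) = \Real\, \langle \xi_{0} \otimes \phi, \nabla\phi\rangle_{T^{*}M \otimes E}. \]

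Finally, using $\nabla\phi = \Pi^{\perp}(\nabla\phi)$ together with the self-adjointness of $\Pi^{\perp}$,
\[ \langle \xi_{0} \otimes \phi, \nabla\phi\rangle = \langle \xi_{0} \otimes \phi, \Pi^{\perp}(\nabla\phi)\rangle = \langle \Pi^{\perp}(\xi_{0} \otimes \phi), \nabla\phi\rangle, \]
and Cauchy--Schwarz on $T^{*}M \otimes E$ delivers the desired
\[ |\phi| \cdot |d|\phi|| \leq |\Pi^{\perp}(\xi_{0} \otimes \phi)| \cdot |\nabla\phi|. \]
There is no real obstacle in this plan; the only point requiring care is the bookkeeping of real vs.\ Hermitian inner products, which is what guarantees that $\xi_{0}$ can be chosen real and of unit norm.
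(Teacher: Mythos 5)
Your proof is correct and follows essentially the same route as the paper: both start from $\tfrac12 d|\phi|^2=\Real\langle\nabla\phi,\phi\rangle$, choose $\xi_0$ as the (real, unit) direction of $d|\phi|$, use that $\nabla\phi$ lies in $F^\perp$ to insert $\Pi^\perp$, move the projection across by self-adjointness, and finish with Cauchy--Schwarz. The paper's $\xi_0:=d|\phi|^2/|d|\phi|^2|$ is exactly the metric dual of your maximizing vector $v_0$, so the two arguments coincide.
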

\begin{proof}  We have
\[ d|\phi|^2=2\Real\langle \nabla\phi,  \phi \rangle.
\]
Let $\xi_0:=d|\phi|^2/|d|\phi|^2|$. Then
\[ 2 |d |\phi| |\cdot| \phi|= |d|\phi|^2|=\langle d|\phi|^2, \xi_0 \rangle=2\langle \Real\langle \nabla \phi,\phi \rangle  , \xi_0\rangle\stackrel{*}{=}2\Real\langle \langle \nabla \phi,\phi \rangle,\xi_0\rangle=\]
\[ =2\Real\langle \nabla \phi ,\xi_0\otimes \phi  \rangle=2\Real\langle \Pi^{\perp}\circ\nabla (\phi) , \xi_0\otimes \phi \rangle=2\Real\langle \nabla \phi ,\Pi^{\perp}(\xi_0\otimes \phi ) \rangle\leq 2 | \nabla \phi | \cdot |\Pi^{\perp} (\xi_0\otimes\phi ) |,
\]
where $*$ equality holds because $\xi_0$ is real.
\end{proof}

Let us notice that 
\[ |\phi|^2=|\xi_0\otimes\phi|^2=|\Pi(\xi_0\otimes\phi)|^2+|\Pi^{\perp}(\xi_0\otimes\phi)|^2=\langle \Pi^*\Pi(\xi_0\otimes\phi), \xi_0\otimes\phi\rangle+|\Pi^{\perp}(\xi_0\otimes\phi)|^2.
\]
where $\Pi^*\Pi$ is the symbol of $L^*L$. So if $L^*L$ is injectively elliptic with a constant of ellipticity $\epsilon$ then 
\[|\Pi^{\perp}(\xi_0\otimes\phi)|^2\leq (1-\epsilon) |\phi|^2.
\]
In combination with  Lemma \ref{elliptKato} we have the following
\begin{prop}\label{constell} Let $L$ be an injectively elliptic Stein-Weiss operator. If $\epsilon$ is a constant of ellipticity for $L^*L$, then $\alpha=\sqrt{1-\epsilon}$ is a refined Kato constant for $\phi\in \Ker{L}$.
\end{prop}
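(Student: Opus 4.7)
The statement is essentially a direct synthesis of Lemma \ref{elliptKato} with the ellipticity estimate already sketched in the paragraph preceding the proposition, so the plan is simply to assemble those two ingredients cleanly.

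First I would invoke Lemma \ref{elliptKato}: for any $\phi\in\Ker L=\Ker(\Pi\circ\nabla)$, away from the zero set of $\phi$ there exists a real unit $1$-form $\xi_0$ (namely $\xi_0=d|\phi|^2/|d|\phi|^2|$) such that
\[
|d|\phi||\cdot|\phi|\;\leq\;|\nabla\phi|\cdot|\Pi^\perp(\xi_0\otimes\phi)|.
\]
This reduces the problem to bounding $|\Pi^\perp(\xi_0\otimes\phi)|$ by a multiple of $|\phi|$.

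Next I would use the orthogonal decomposition $\bC^n\otimes E=F\oplus F^\perp$, which, applied to $\xi_0\otimes\phi$, gives
\[
|\phi|^2=|\xi_0|^2|\phi|^2=|\xi_0\otimes\phi|^2=\langle\Pi^*\Pi(\xi_0\otimes\phi),\xi_0\otimes\phi\rangle+|\Pi^\perp(\xi_0\otimes\phi)|^2.
\]
Since $\Pi^*\Pi$ is the symbol $\sigma_{\xi_0}(L^*L)$, the ellipticity hypothesis yields
\[
\langle\Pi^*\Pi(\xi_0\otimes\phi),\xi_0\otimes\phi\rangle\;\geq\;\epsilon|\xi_0|^2|\phi|^2=\epsilon|\phi|^2,
\]
so that $|\Pi^\perp(\xi_0\otimes\phi)|^2\leq(1-\epsilon)|\phi|^2$.

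Finally I would substitute this bound into the inequality from Lemma \ref{elliptKato} and divide by $|\phi|$ on the complement of $\phi^{-1}(0)$, obtaining
\[
|d|\phi||\;\leq\;\sqrt{1-\epsilon}\,|\nabla\phi|,
\]
which is exactly the refined Kato inequality with constant $\alpha=\sqrt{1-\epsilon}$. There is no real obstacle here: the only subtlety is that the argument is valid only away from $\phi^{-1}(0)$, but that is already built into the definition of a refined Kato inequality in the text. Everything else is bookkeeping of the two inequalities just derived.
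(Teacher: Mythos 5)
Your proof is correct and is exactly the argument the paper gives: it combines Lemma \ref{elliptKato} with the orthogonal decomposition $|\phi|^2=\langle\Pi^*\Pi(\xi_0\otimes\phi),\xi_0\otimes\phi\rangle+|\Pi^{\perp}(\xi_0\otimes\phi)|^2$ and the ellipticity bound, precisely as in the paragraph preceding the proposition. No differences worth noting.
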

\begin{lemma}\label{constellhod}  A constant of ellipticity for the rescaled Hodge-de Rham operator acting on $k$-forms is 
\[ \epsilon=\left\{ \begin{array}{ccc} 1  &\mbox{if} &k=0,n;\\
\min{\left\{\frac{1}{k+1},\frac{1}{n-k+1}\right\}} &\mbox{if}  &1\leq k\leq n-1. \end{array}\right. \]
\end{lemma}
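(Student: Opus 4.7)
The plan is to compute the symbol of $L^*L$ directly from the explicit description of the projection $\Pi$ given in Proposition \ref{rescSW}, and then estimate it. With $L = \Pi\circ\nabla$ the rescaled Hodge-de Rham operator on $k$-forms, the symbol of $L^*L$ at a covector $\xi$ is the endomorphism $v\mapsto \Pi_\xi^*\Pi_\xi v$ where $\Pi_\xi(v):=\Pi(\xi\otimes v)$. Plugging the formula for $\Pi$ into $|\Pi(\xi\otimes v)|^2$, the two summands are orthogonal (they land in $\Lambda^{k+1}$ and $\Lambda^{k-1}$ respectively), so
\[\langle \sigma_\xi(L^*L)v,v\rangle \;=\; |\Pi(\xi\otimes v)|^2 \;=\; \frac{1}{k+1}|\xi\wedge v|^2 \;+\; \frac{1}{n-k+1}|\iota_{\xi^*}v|^2.\]

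The problem now reduces to a purely linear-algebraic lower bound on this quadratic form. The key ingredient I would use is the Pythagorean-type identity
\[|\xi\wedge v|^2 + |\iota_{\xi^*}v|^2 \;=\; |\xi|^2\,|v|^2,\]
which is verified by choosing an orthonormal basis $\{e_1,\ldots,e_n\}$ with $e_1 = \xi^*/|\xi|$ and decomposing $v = e_1^*\wedge\alpha + \beta$, where neither $\alpha\in\Lambda^{k-1}$ nor $\beta\in\Lambda^k$ involves $e_1^*$; then $\iota_{\xi^*}v$ corresponds to $\alpha$ and $\xi\wedge v$ to $\beta$, so Pythagoras in $\Lambda^k$ gives the claim. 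Combining the two displays immediately yields $\langle\sigma_\xi(L^*L)v,v\rangle\ge \min\{\tfrac{1}{k+1},\tfrac{1}{n-k+1}\}\,|\xi|^2|v|^2$ for $1\le k\le n-1$.

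To confirm that this $\epsilon$ is sharp (and is therefore the ``right'' ellipticity constant to feed into Proposition \ref{constell}), I would exhibit equality cases. If $k\ge n/2$, any nonzero $v\in\Lambda^k$ with $\iota_{\xi^*}v = 0$ kills the second term, so the ratio equals $\tfrac{1}{k+1}$; such a $v$ exists precisely because $k\le n-1$. Symmetrically, if $k\le n/2$, choosing $v = \xi\wedge w$ kills the first term and realizes the ratio $\tfrac{1}{n-k+1}$; this requires $k\ge 1$. Both endpoints of the min are therefore attained in the interior range.

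Finally, the endpoint cases $k=0$ and $k=n$ need separate treatment, since one summand of the rescaled operator is absent (there are no $(-1)$- or $(n+1)$-forms); here $L$ degenerates to $d$ alone, respectively $d^*$ alone, with rescaling factor $1$, and the above identity collapses to either $|\xi\wedge v|^2=|\xi|^2|v|^2$ (for $k=0$, as $\iota_{\xi^*}v=0$ trivially) or $|\iota_{\xi^*}v|^2=|\xi|^2|v|^2$ (for $k=n$, as $\xi\wedge v=0$ trivially), giving $\epsilon=1$. I do not anticipate any real obstacle: everything hinges on the single linear-algebra identity above, and once it is in hand the estimates and equality cases are immediate; the only minor care needed is to keep the dimensional hypotheses on $k$ matched with the existence of the extremizers.
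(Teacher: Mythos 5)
Your argument is correct and coincides with the route the paper itself sketches as its ``second simple proof'': computing the symbol of $L^*L$ and invoking the Cartan formula $e_\xi\iota_{\xi^*}+\iota_{\xi^*}e_\xi=|\xi|^2\id$, of which your Pythagorean identity $|\xi\wedge v|^2+|\iota_{\xi^*}v|^2=|\xi|^2|v|^2$ is exactly the quadratic form paired against $v$. The paper's primary proof is the even shorter operator-level inequality $L^*L=\frac{1}{k+1}d^*d+\frac{1}{n+1-k}dd^*\geq\min\left\{\frac{1}{k+1},\frac{1}{n-k+1}\right\}\Delta$, but your symbol computation (with the sharpness discussion as a harmless bonus) is a complete and equivalent substitute.
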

\begin{proof} Let  $1\leq k\leq n-1$. If $L=\frac{1}{\sqrt{k+1}}d+\frac{1}{\sqrt{n+1-k}}d^*$ then 
\[ L^*L=\frac{1}{k+1}d^*d+\frac{1}{n+1-k}dd^*\geq \min{\left\{\frac{1}{k+1},\frac{1}{n-k+1}\right\}}\Delta
\]
and the Laplacian has constant of ellipticity $1$.  For $k=0$ and $k=n$ the operator $L^*L$ is just the Laplacian on functions and on top degree forms, respectively.

A second simple proof can be provided using  the symbol of $L^*L$ and the Cartan formula
\[ e_u\iota_u+\iota_ue_u=|u|^2\id,
\] where $e_u$ and $i_u$ are exterior multiplication and contraction by $u$, respectively.
\end{proof}
Putting together Proposition \ref{constell} and Lemma \ref{constellhod} we have  \cite[Theorem 6.3(ii)]{CaGaHe}  $(k=1)$.
\begin{theorem}[Calderbank-Gauduchon-Herzlich]\label{main} Let $\omega$ be a $k$-form in the kernel of $d+d^*$. Then $\omega$ satisfies the refined Kato inequality
\[ |d|\omega|| \leq \sqrt{\frac{n-k}{n-k+1}}|\nabla \omega| \quad\mbox{ if } \quad1\leq k\leq n/2,
\] and
\[|d|\omega||\leq \sqrt{\frac{k}{k+1}}|\nabla \omega| \quad\mbox{ if } \quad  n/2\leq k\leq n-1,
\]
while for $k=0,n$ the form $\omega$ is parallel. 
\end{theorem}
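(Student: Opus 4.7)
The plan is to chain together the three results already established in Section 3: Proposition \ref{rescSW} (the rescaled Hodge-de Rham operator is Stein-Weiss), Proposition \ref{constell} (ellipticity constants control Kato constants), and Lemma \ref{constellhod} (the explicit value of the ellipticity constant). The initial observation is that because $d\omega$ and $d^*\omega$ live in different degrees, $(d+d^*)\omega=0$ forces $d\omega=0=d^*\omega$ separately, and hence $\omega\in\ker L$ for any nonzero linear combination of the form $L=a\,d+b\,d^*$; in particular for the rescaled operator $L=\frac{1}{\sqrt{k+1}}\,d+\frac{1}{\sqrt{n-k+1}}\,d^*$, which is Stein-Weiss by Proposition \ref{rescSW}. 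Proposition \ref{constell} then delivers a refined Kato inequality for $\omega$ with constant $\alpha=\sqrt{1-\epsilon}$ for any ellipticity constant $\epsilon$ of $L^*L$.

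Next, for $1\le k\le n-1$ I would substitute $\epsilon=\min\{1/(k+1),\,1/(n-k+1)\}$ from Lemma \ref{constellhod} and split into cases. For $1\le k\le n/2$ one has $n-k+1\ge k+1$, so the minimum is $1/(n-k+1)$ and
\[
\alpha=\sqrt{1-\tfrac{1}{n-k+1}}=\sqrt{\tfrac{n-k}{n-k+1}}.
\]
For $n/2\le k\le n-1$ the opposite inequality gives the minimum $1/(k+1)$ and $\alpha=\sqrt{k/(k+1)}$. These are precisely the two inequalities claimed in the theorem.

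For the edge cases $k=0$ and $k=n$, Lemma \ref{constellhod} yields $\epsilon=1$, which would formally give $\alpha=0$; to upgrade ``$|\omega|$ locally constant'' to ``$\omega$ parallel'' I would return to the Stein-Weiss projection $\Pi$ itself. When $k=0$, one has $\mathbb{R}^n\otimes\Lambda^0\mathbb{R}^n\cong\Lambda^1\mathbb{R}^n$, so the target $\Lambda^{k+1}\oplus\Lambda^{k-1}=\Lambda^1\oplus 0$ already fills the entire tensor product and $\Pi$ is an isometric isomorphism; when $k=n$, the target $\Lambda^{n+1}\oplus\Lambda^{n-1}=0\oplus\Lambda^{n-1}$ has the same dimension $n$ as $\mathbb{R}^n\otimes\Lambda^n\mathbb{R}^n$, so $\Pi$ is again an isometric isomorphism. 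In either case $L$ coincides with $\nabla$ up to a scalar factor, whence $\ker L=\ker\nabla$ and $\omega$ is parallel.

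I do not anticipate any substantive obstacle: once the three preparatory results are in place, the theorem is essentially a bookkeeping compilation. The only mildly delicate point is the endpoint treatment for $k=0,n$, where the naive formula degenerates; this is handled by the dimension-count argument above, which shows that the Stein-Weiss projection already absorbs the full gradient in those degrees.
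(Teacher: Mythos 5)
Your proposal is correct and follows essentially the same route as the paper: the paper's proof of Theorem \ref{main} is precisely the one-line combination of Proposition \ref{constell} and Lemma \ref{constellhod} (with Proposition \ref{rescSW} supplying the Stein-Weiss structure), together with the same case split on $k$. Your additional care at the endpoints $k=0,n$ --- identifying the projection as an isometric isomorphism so that $\Ker L=\Ker\nabla$ --- spells out a detail the paper leaves implicit when it asserts that $\omega$ is parallel there.
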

\begin{remark} The question of sharpness in the previous inequalities depends in general on the manifold under consideration.  For example, if the manifold $M$ is compact and symmetric then every harmonic field is parallel, hence the best constant  in this case is $0$. 

 In general, to have equality above one first needs equality in Lemma \ref{elliptKato} which after a quick inspection implies that the form $\omega$ has to satisfy the relation 
\[ \nabla \omega=\xi\otimes\omega -\theta_1\left(\frac{1}{\sqrt{k+1}}\xi\wedge\omega\right)-\theta_2\left(-\frac{1}{\sqrt{n-k+1}}\iota_{\xi}(\omega)\right)
\]
for some $1$-form $\xi$, where $\theta_1$ and $\theta_2$ were defined at (\ref{thetalabel1}) and (\ref{thetalabel2}). More importantly, $\omega$ has to be a harmonic field. On the other hand, Branson shows in Theorem $7$ of \cite{Br} that such a form exists on flat $\mathbb{R}^n$.  In his proof, it is essential that $E$ is an irreducible representation of $SO(n)$. Branson's example is not $L^2$ integrable, hence it is conceivable that the inequalities above can be further refined if one imposes such a global condition. 
\end{remark}

\begin{remark} Notice that the inequalities in the previous theorem respect Poincar\'{e} duality in the sense that the same refined Kato constant works both for $k$ and for $(n-k)$ -forms. One has to  expect this because of the next basic result.
\end{remark}
\begin{lemma}\label{star Hodge} The star Hodge operator is an isometry and a parallel endomorphism of $\Lambda^*T^*M$. 
\end{lemma}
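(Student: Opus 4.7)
The plan is to derive both assertions directly from the pointwise defining relation of the Hodge star,
\[ \alpha\wedge *\beta \;=\; \langle\alpha,\beta\rangle\,\dvol,\qquad \alpha,\beta\in\Lambda^kT^*_pM, \]
combined with the standard identity $*\!*\!\beta=(-1)^{k(n-k)}\beta$ on $k$-forms and the two facts that the Levi-Civita connection preserves the Riemannian metric and the Riemannian volume form.

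For the isometry claim, the natural move is to apply the defining relation with $\alpha,\beta$ replaced by $*\alpha,*\beta$. Concretely, I would compute
\[ \langle *\alpha,*\beta\rangle\,\dvol \;=\; *\alpha\wedge *(*\beta) \;=\; (-1)^{k(n-k)}\,*\alpha\wedge \beta \;=\; (-1)^{k(n-k)}(-1)^{k(n-k)}\,\beta\wedge *\alpha \;=\; \langle\alpha,\beta\rangle\,\dvol, \]
using graded commutativity to swap $*\alpha$ (of degree $n-k$) past $\beta$ (of degree $k$). The two parity signs cancel, giving the isometry.

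For parallelism, I would apply $\nabla_X$ to both sides of the defining relation and use the Leibniz rule. On the right-hand side, metric-compatibility gives $X\langle\alpha,\beta\rangle=\langle\nabla_X\alpha,\beta\rangle+\langle\alpha,\nabla_X\beta\rangle$, and $\nabla_X\dvol=0$; on the left, one gets $\nabla_X\alpha\wedge *\beta+\alpha\wedge\nabla_X(*\beta)$. Applying the defining relation twice more to substitute $\nabla_X\alpha\wedge *\beta=\langle\nabla_X\alpha,\beta\rangle\dvol$ and $\alpha\wedge *(\nabla_X\beta)=\langle\alpha,\nabla_X\beta\rangle\dvol$ makes every term cancel except
\[ \alpha\wedge\bigl(\nabla_X(*\beta)-*(\nabla_X\beta)\bigr)=0 \qquad\text{for all }\alpha\in\Lambda^kT^*_pM. \]
Since the wedge pairing $\Lambda^k\otimes\Lambda^{n-k}\to\Lambda^n$ is perfect (which is exactly what makes $*$ an isomorphism in the first place), this forces $\nabla_X(*\beta)=*(\nabla_X\beta)$, i.e.\ $[\nabla_X,*]=0$.

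There is no substantive obstacle here; the only care needed is the sign bookkeeping in the isometry calculation and, in the parallelism step, the appeal to non-degeneracy of the wedge pairing at the end. A conceptually cleaner alternative would observe that the holonomy of $\nabla$ lies in $SO(n)$ and that $*$ is $SO(n)$-equivariant (it is built solely from the inner product and the orientation, both of which are holonomy-invariant), so $*$ automatically commutes with parallel transport; but the computational route above is more self-contained and does not require invoking a holonomy principle.
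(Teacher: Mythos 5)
Your proof is correct and follows essentially the same route as the paper's: apply $\nabla_X$ to the defining relation $\alpha\wedge *\beta=\langle\alpha,\beta\rangle\dvol$, use metric compatibility and $\nabla_X\dvol=0$, and conclude from the non-degeneracy of the wedge pairing. The only differences are cosmetic: you spell out the isometry claim (which the paper dismisses as standard) and you quote $\nabla\dvol=0$ as known, whereas the paper derives it in one line by differentiating $|\dvol|^2=1$ and using that $\Lambda^nT^*M$ is a line bundle.
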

\begin{proof} The fact that it is an isometry is standard. To prove that it is parallel one first easily shows that the volume form  $\dvol$ is parallel. This follows by differentiating
\[ |\dvol|^2=1.
\]
which implies that $\langle \nabla_X\dvol,\dvol\rangle=0$ for all vector fields $X$. Then we apply $\nabla_X$ to the following pointwise equality which defines the Hodge star operator
\[\eta\wedge *\omega=\langle \eta,\omega\rangle\dvol\quad\quad \forall\eta,\omega\in\Gamma(\Lambda^kT^*M)\]
 to get 
\[\nabla_X\eta\wedge*\omega+\eta\wedge\nabla_X(*\omega)=(X\langle\eta,\omega\rangle)\dvol+\langle \eta,\omega\rangle\nabla_X{\dvol}.
\]  Hence $\eta\wedge(\nabla_X*\omega)=\langle\eta,\nabla_X\omega\rangle\dvol=\eta\wedge *\nabla_X\omega$, for all $\eta,\omega\in \Gamma(\Lambda^kT^*M)$.
\end{proof}
\begin{remark} The theory does not provide an inequality for harmonic forms, only for forms $\omega$ in the kernel of $d+d^*$, the so called harmonic fields. However in conjunction with an $L^2$ bound on $\omega$ one knows (see Proposition \ref{Yau}) that the harmonic fields are the same as the harmonic forms which is the case when $M$ is complete, for example.
\end{remark}

\section{The K\"{a}hler case}
In the K\"{a}hler case, using essentially the same theory, we get a better Kato constant for harmonic fields that respects Hodge duality. In what follows, $M$ is a K\"{a}hler manifold of complex dimension $n$. Notice that we can talk about the unitary frame bundle of $M$ and about $U(n)$ Stein-Weiss operators which are defined exactly as in Section \ref{SteinWeiss} by replacing $SO(n)$-representations with $U(n)$-representations and morphisms thereof.

 The result is as follows.
\begin{theorem}\label{Kahler} Let $0\leq p,q\leq n$  and let $\omega\in\Gamma(\Lambda^{p,q}T^*M)$ such that $(d+d^*)\omega=0$. Let $\alpha\geq0$ be such that
\[ \alpha^2:=\left\{\begin{array}{cc} 
\frac{1}{2} \quad&\mbox{if } p\in\{0,n\}\mbox{ or } q\in\{0,n\};\\
 \min{\left\{\max{\left\{\frac{2p+1}{2p+2},\frac{2n-2p+1}{2n-2p+2}\right\}},\max{\left\{\frac{2q+1}{2q+2},\frac{2n-2q+1}{2n-2q+2}\right\}}\right\}}& \mbox{ otherwise}.\end{array}\right.
\] Then
\[|d|\omega||\leq \alpha|\nabla \omega|.
\]  
\end{theorem}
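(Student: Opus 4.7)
The plan is to imitate the $SO(n)$-argument of Proposition~\ref{rescSW}--Proposition~\ref{constell} using two $U(n)$-Stein-Weiss operators, each tailored to one of the two Dolbeault directions, and then to keep the sharper of the two resulting Kato bounds.

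Since $d = \partial + \bar\partial$ and $d^* = \partial^* + \bar\partial^*$ send the pure bidegree $(p,q)$ into four pairwise distinct bidegrees, the equation $(d+d^*)\omega = 0$ forces
\[ \partial\omega = \bar\partial\omega = \partial^*\omega = \bar\partial^*\omega = 0 \]
separately. I would then introduce
\[
L_1 := \frac{1}{\sqrt{p+1}}\partial + \frac{1}{\sqrt{n-p+1}}\partial^*,\qquad
L_2 := \frac{1}{\sqrt{q+1}}\bar\partial + \frac{1}{\sqrt{n-q+1}}\bar\partial^*,
\]
and reprise the proof of Proposition~\ref{rescSW} essentially verbatim, but with $SO(n)$ replaced by $U(n)$, $\mathbb{R}^n$ replaced by $T^{1,0*}M$ (resp.\ $T^{0,1*}M$), and the orthonormal basis $\{e_i\}$ replaced by a unitary basis $\{dz^i\}$ (resp.\ $\{d\bar z^i\}$). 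The corresponding Kähler analogs of the embeddings $\theta_1, \theta_2$ from (\ref{thetalabel1}),(\ref{thetalabel2}) remain isometric into $T^{1,0*}M\otimes\Lambda^{p,q}$ with mutually orthogonal images (the Hermitian version of Lemma~\ref{linalg}), so both $L_1$ and $L_2$ are genuine Stein-Weiss operators, and by the bidegree remark $\omega\in\ker L_1\cap\ker L_2$.

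The ellipticity constants follow from a direct computation. The Clifford identity $e_{\xi^{1,0}}\iota_{\xi^{1,0}} + \iota_{\xi^{1,0}}e_{\xi^{1,0}} = |\xi^{1,0}|^2\id$ on $\Lambda^{p,q}$, for a real unit $\xi$ (so that $|\xi^{1,0}|^2 = 1/2$), yields
\[|\xi^{1,0}\wedge\omega|^2 + |\iota_{\xi^{1,0}}\omega|^2 = \tfrac{1}{2}|\omega|^2.\]
Writing $a := |\xi^{1,0}\wedge\omega|^2$, the symbol of $L_1^*L_1$ evaluates to
\[\langle \sigma_\xi(L_1^*L_1)\omega,\omega\rangle = \frac{a}{p+1} + \frac{\tfrac{1}{2}-a}{n-p+1}.\]
For $1\leq p\leq n-1$ the parameter $a$ exhausts the full interval $[0,1/2]$ (already at a single basis form $\omega = dz^{1}\wedge\ldots\wedge dz^{p}\wedge d\bar z^J$, by letting $\xi^{1,0}$ move), and the infimum equals $\min\{1/(2(p+1)),\, 1/(2(n-p+1))\}$; for $p\in\{0,n\}$, bidegree freezes $a$ at $1/2$ or $0$ respectively, and the infimum collapses to $1/2$. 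An identical computation with $(\xi^{0,1}, q)$ in place of $(\xi^{1,0}, p)$ produces the matching constant $\epsilon_2$ for $L_2$.

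Proposition~\ref{constell} applied to $L_1$ and $L_2$ separately then yields two refined Kato inequalities with constants $\alpha_i^2 = 1 - \epsilon_i$, whose minimum reproduces exactly the $\alpha^2$ in the statement. The main obstacle, as in the Riemannian case, is the Stein-Weiss verification in step two: one has to check that with the prescribed rescalings the symbols of $L_1$ and $L_2$ are honest orthogonal projections of $T^*M\otimes\Lambda^{p,q}$ onto the correct sums of $U(n)$-subrepresentations. This amounts to the $U(n)$-analog of Proposition~\ref{rescSW} and Lemma~\ref{linalg}, together with some care about how the Hermitian metric on $T^{1,0*}M$ relates to the Riemannian metric on $T^*M$ via the decomposition $\xi = \xi^{1,0} + \xi^{0,1}$ of a real covector. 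The conceptual reason for the improvement over the Riemannian theorem is that $L_1$ and $L_2$ use disjoint portions of the gradient and therefore encode independent information.
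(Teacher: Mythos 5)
Your proposal is correct and follows essentially the same route as the paper: the same reduction of $(d+d^*)\omega=0$ to the vanishing of all four Dolbeault pieces, the same two $U(n)$ Stein--Weiss operators $L_1,L_2$ with the same rescalings and intertwiners, and the same final minimization over the two resulting constants. The only (immaterial) divergence is that you extract the ellipticity constants from the symbol via the Hermitian Cartan identity rather than from the K\"ahler identity $\partial^*\partial+\partial\partial^*=\tfrac12\Delta$ as the paper does --- an alternative the paper itself points out in the proof of Lemma~\ref{constellhod}.
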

\begin{proof} We decompose 
\[ d=\partial+\bar{\partial} \quad\quad \mbox{ and }\quad\quad d^*=\partial^*+\bar{\partial}^*.
\]
Notice that since $\omega\in\Gamma(\Lambda^{p,q}T^*M)$ we have $(d+d^*)\omega=0$ is equivalent with 
\[\partial\omega=\partial^*\omega=\bar{\partial}\omega=\bar{\partial}^*\omega=0.\]
We will see that
\begin{equation} \label{UnSW} L_1:=\frac{1}{\sqrt{p+1}}\partial+\frac{1}{\sqrt{n-p+1}}\partial^*\quad\quad \mbox{ and }\quad\quad L_2:=\frac{1}{\sqrt{q+1}}\bar\partial+\frac{1}{\sqrt{n-q+1}}\bar\partial^*
\end{equation}
are $U(n)$ Stein-Weiss operators.
Since the manifold is K\"{a}hler, each of the vector bundles $\Lambda^{p,q}$ comes endowed with a Levi-Civita connection. One can write down the symbols of each of the operators $\partial,\bar{\partial},\partial^*,\bar{\partial}^*$ (compare with Proposition 3.67 in \cite{BGV}).
\[ \sigma(\partial):T^*M\rightarrow \End(\Lambda^{p,q}T^*M,\Lambda^{p+1,q}T^*M),\quad\quad \sigma(\partial)_{\xi}(\omega)=i\xi^{1,0}\wedge\omega;\]
\[   \sigma(\bar{\partial}):T^*M\rightarrow \End(\Lambda^{p,q}T^*M,\Lambda^{p,q+1}T^*M),\quad\quad  \sigma(\bar{\partial})_{\xi}(\omega)=i\xi^{0,1}\wedge\omega;
\]
\[ \sigma(\partial^*):T^*M\rightarrow \End(\Lambda^{p,q}T^*M,\Lambda^{p-1,q}T^*M),\quad\quad \sigma(\partial^*)_{\xi}(\omega)=(-i)\iota_{(\xi^{0,1})^*}(\omega);\]
\[ \sigma (\bar{\partial}^*):T^*M\rightarrow \End(\Lambda^{p,q}T^*M,\Lambda^{p,q-1}T^*M),\quad\quad \sigma (\bar{\partial}^*)_{\xi}(\omega)=(-i)\iota_{(\xi^{1,0})^*}(\omega),
\]
where $\iota$ represents contraction and $((\xi^{0,1})^*,(\xi^{1,0})^*)\in T^{1,0}M\oplus T^{0,1}M$ is the metric dual of $\xi=(\xi^{1,0},\xi^{0,1})\in T^*_{\mathbb{C}}M$. Notice that for $\xi\in T^*M$ we have $\xi^{0,1}=\overline{\xi^{1,0}}$.

We want to show that $(-i)1/\sqrt{p+1}\sigma(\partial)$ when seen as a linear map defined on $T^*_{\mathbb{C}}M$ is an associated bundle morphism to an orthogonal projection of $U(n)$ representations. Analogous statements hold for the other three maps. 

 Let $\bar{\mathbb{C}}^{n}=(\mathbb{R}^{2n},-i)$ be the conjugate of the standard complex space. The standard action of $U(n)$ is complex linear on $\bar{\mathbb{C}}^{n}$ and the standard Hermitian metric on $\mathbb{C}^n$ builds an isomorphism of $U(n)$ representations between $\bar{\mathbb{C}}^n$ and $(\mathbb{C}^n)^*$.  The bundles $\Lambda^{p,q}T^*M:=\Lambda^{p}T^{1,0}M^*\otimes \Lambda^{q}T^{0,1}M^*$ are associated bundles to the unitary frame bundle of $M$ (induced by the Riemannian metric and the complex structure) and the canonical representations of $U(n)$ on $\Lambda^{p}\bar{\mathbb{C}}^n\otimes\Lambda^q\mathbb{C}^n$. 

 Let $e_i\in \mathbb{C}^n$, $i=1\ldots n$ be an orthonormal basis with respect to the standard Hermitian metric and denote by $\bar{e}_i\in\bar{\mathbb{C}}^n$, $i=1\ldots n$ the dual, or conjugate basis. 
We define 
\[  \theta^{\partial}: \Lambda^{p+1}\bar{\mathbb{C}}^n\otimes \Lambda^q\mathbb{C}^n\hookrightarrow (\bar{\mathbb{C}}^n\oplus \mathbb{C}^n)\otimes\Lambda^{p}\bar{\mathbb{C}}^n\otimes\Lambda^{q}\mathbb{C}^n,\]\[ \theta^{\partial}(\omega_1\wedge\ldots \wedge \omega_{p+1}\otimes\eta)=\frac{1}{\sqrt{p+1}}\sum_{i=1}^{p+1}(-1)^{i-1}(\omega_i,0)\otimes \omega_1\wedge\ldots \hat{\omega}_i\ldots\wedge \omega_{p+1}\otimes\eta;
\]
\[ \theta^{\bar{\partial}}  :\Lambda^{p}\bar{\mathbb{C}}^n\otimes \Lambda^{q+1}\mathbb{C}^n\hookrightarrow (\bar{\mathbb{C}}^n\oplus \mathbb{C}^n)\otimes\Lambda^{p}\bar{\mathbb{C}}^n\otimes\Lambda^{q}\mathbb{C}^n,\]\[ \theta^{\bar{\partial}}(\omega\otimes\eta_1\wedge\ldots\wedge \eta_{q+1})=\frac{1}{\sqrt{q+1}}\sum_{i=1}^{q+1}(-1)^{i-1}(0,\eta_i)\otimes \omega\otimes\eta_1\wedge\ldots \hat{\eta}_i\ldots\wedge \eta_{q+1};
\]
\[\theta^{\partial^*}: \Lambda^{p-1}\bar{\mathbb{C}}^n\otimes \Lambda^q\mathbb{C}^n\hookrightarrow (\bar{\mathbb{C}}^n\oplus \mathbb{C}^n)\otimes\Lambda^{p}\bar{\mathbb{C}}^n\otimes\Lambda^{q}\mathbb{C}^n,
\]
\[\theta^{\partial^*}(\omega\otimes\eta)=-\frac{1}{\sqrt{n-p+1}}\sum_{i=1}^{n}(0,e_i)\otimes\bar{e}_i\wedge\omega\otimes\eta;
\]
\[\theta^{\bar{\partial}^*}: \Lambda^{p}\bar{\mathbb{C}}^n\otimes \Lambda^{q-1}\mathbb{C}^n\hookrightarrow (\bar{\mathbb{C}}^n\oplus \mathbb{C}^n)\otimes\Lambda^{p}\bar{\mathbb{C}}^n\otimes\Lambda^{q}\mathbb{C}^n,
\]
\[\theta^{\bar{\partial}^*}(\omega\otimes\eta)=-\frac{1}{\sqrt{n-q+1}}\sum_{i=1}^{n}(\bar{e}_i,0)\otimes\omega\otimes e_i\wedge \eta.
\]
These intertwiners, just as in the Riemannian case, are isometric monomorphisms of $U(n)$ representations. One easily checks the following relations,
\[  \frac{-i}{\sqrt{p+1}}\sigma(\partial)\circ\theta^{\partial}=\id_{\Lambda^{p+1}\bar{\mathbb{C}}^n\otimes \Lambda^q\mathbb{C}^n},\quad\quad\quad \frac{-i}{\sqrt{n-p+1}}\sigma(\partial^*)\circ\theta^{\partial^*}=\id_{\Lambda^{p-1}\bar{\mathbb{C}}^n\otimes \Lambda^q\mathbb{C}^n},
\]
\[  \frac{-i}{\sqrt{q+1}}\sigma(\bar\partial)\circ\theta^{\bar\partial}=\id_{\Lambda^{p}\bar{\mathbb{C}}^n\otimes \Lambda^{q+1}\mathbb{C}^n},\quad\quad\quad \frac{-i}{\sqrt{n-q+1}}\sigma(\bar\partial^*)\circ\theta^{\bar\partial^*}=\id_{\Lambda^{p}\bar{\mathbb{C}}^n\otimes \Lambda^{q-1}\mathbb{C}^n}.
\]
The symbol maps $\sigma(\partial), \sigma(\partial^*), \sigma(\bar\partial), \sigma(\bar\partial^*)$ are the obvious  maps between vector spaces corresponding to the morphisms of vector bundles above.

Using Lemma \ref{linalg} (which works in the Hermitian case as well) one can check that  
\[ \Ker \sigma(\partial)=\Ima (\theta^{\partial})^\perp,\quad\quad\quad \Ker \sigma(\partial^*)=\Ima (\theta^{\partial^*})^\perp
\]
\[ \Ker \sigma(\bar\partial)=\Ima (\theta^{\bar\partial})^\perp,\quad\quad\quad \Ker \sigma(\bar\partial^*)=\Ima (\theta^{\bar\partial^*})^\perp.
\]
Combining this with the fact that the images of $\partial$ and $\partial^*$ are orthogonal and an analogous statement about $\bar{\partial}$ and $\bar{\partial}^*$, one gets the  claim about $L_1$ and $L_2$.

The constants of ellipticity for $L_1^*L_1$ and $L_2^*L_2$ are easy to compute from the relations
\[\partial^*\partial+\partial\partial^*=\bar{\partial}^*\bar{\partial}+\bar{\partial}\bar{\partial}^*=\frac{1}{2}\Delta.\]
Hence   for $p\notin\{0,n\}$ the constant for $L_1^*L_1$ is $\frac{1}{2}\min{\{\frac{1}{p+1},\frac{1}{n-p+1}\}}$ and similarly for $q\notin\{0,n\}$ the constant for $L_2^*L_2$ is $\frac{1}{2}\min{\{\frac{1}{q+1},\frac{1}{n-q+1}\}}$. 

When $p\in\{0,n\}$ then $L_1^*L_1=\frac{1}{2}\Delta$ and the constant is $\frac{1}{2}$. Similarly for $q\in{\{0,n\}}$, $L_2^*L_2=\frac{1}{2}\Delta$. One now chooses the smaller Kato constant from the ones provided by the inequalities induced by $L_1$ and by $L_2$.
\end{proof} 
\begin{remark} The Kato constants in the K\"{a}hler case provided by Theorem \ref{Kahler}  for a harmonic field $\omega$ of bidegree $(p,q)$ are smaller or equal than the Kato constants provided by Theorem \ref{main} except in the case $p=q$.
\end{remark}

A special case of the theorem is the following
\begin{corollary} \label{holforms} If $\omega\in\Gamma(\Lambda^{p,0}M)$ is a holomorphic $p$-form on a K\"{a}hler
 manifold then it satisfies the Kato inequality
 \[|d|\omega||\leq \frac{1}{\sqrt{2}}|\nabla \omega|.\]
 \end{corollary}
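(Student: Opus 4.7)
The plan is to apply Proposition \ref{constell} directly to the $U(n)$ Stein-Weiss operator $L_2$ from display (\ref{UnSW}), specialized to $q=0$, rather than invoking Theorem \ref{Kahler} as a black box. This detour is forced on us because a holomorphic $p$-form need not satisfy the full harmonic field equation $(d+d^*)\omega=0$ (take for instance $z_1\,dz_2$ on $\mathbb{C}^2$, which is holomorphic but has $\partial(z_1\,dz_2)=dz_1\wedge dz_2\neq 0$). What does survive is membership in the kernel of the ``anti-holomorphic half'' $L_2$, and this alone is enough to run the Kato argument.

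The first step is to check that a holomorphic $(p,0)$-form $\omega$ belongs to $\ker L_2$. The condition $\bar{\partial}\omega=0$ is the definition of holomorphicity, and $\bar{\partial}^*\omega=0$ holds trivially for degree reasons, since $\bar{\partial}^*$ sends $\Lambda^{p,0}T^*M$ into $\Lambda^{p,-1}T^*M=0$. Thus
\[ L_2\omega \;=\; \bar{\partial}\omega + \frac{1}{\sqrt{n+1}}\bar{\partial}^*\omega \;=\; 0.\]

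The second step is to read off a constant of ellipticity for $L_2^*L_2$ on $(p,0)$-forms. Because $\bar{\partial}^*$ vanishes identically on $\Lambda^{p,0}$, the operator $L_2$ restricted to $\Gamma(\Lambda^{p,0}T^*M)$ reduces to $\bar{\partial}$ and therefore $L_2^*L_2=\bar{\partial}^*\bar{\partial}$ on this bidegree. The K\"ahler identity $\bar{\partial}^*\bar{\partial}+\bar{\partial}\bar{\partial}^*=\frac{1}{2}\Delta$, together with the vanishing of $\bar{\partial}\bar{\partial}^*$ on $(p,0)$-forms (which comes again from $\bar{\partial}^*|_{\Lambda^{p,0}}=0$), gives $L_2^*L_2=\frac{1}{2}\Delta$ on $\Lambda^{p,0}T^*M$. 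Hence $\epsilon=\frac{1}{2}$ is a constant of ellipticity.

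Finally, Proposition \ref{constell} delivers the refined Kato constant $\alpha=\sqrt{1-\epsilon}=1/\sqrt{2}$. I do not expect a substantial technical obstacle here; the only pitfall to avoid is the temptation to quote Theorem \ref{Kahler} directly, which would impose the unnecessary and generally false hypothesis $(d+d^*)\omega=0$. The whole point of the Stein-Weiss framework is that it only ever needs $\omega$ in the kernel of a \emph{single} injectively elliptic Stein-Weiss operator, and holomorphicity secures exactly that for $L_2$ with $q=0$.
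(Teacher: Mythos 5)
Your proof is correct and is essentially the paper's own argument: the paper likewise observes that on $\Gamma(\Lambda^{p,0}T^*M)$ the operator $L_2$ reduces to $\bar{\partial}$, that it is an injectively elliptic Stein-Weiss operator with ellipticity constant $1/2$ for $L_2^*L_2$, and then applies Proposition \ref{constell} to get $\alpha=1/\sqrt{2}$. Your preliminary remark that Theorem \ref{Kahler} cannot be quoted as a black box (since a holomorphic form on a non-compact K\"ahler manifold need not satisfy $(d+d^*)\omega=0$) is a fair caveat about the paper's phrase ``a special case of the theorem,'' but the paper's actual proof already proceeds exactly as you do.
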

 \begin{proof} The operator $L_2=\bar{\partial}$ is injectively elliptic Stein-Weiss on $\Gamma(\Lambda^{p,0}M)$ and the constant of ellipticity  for $L_2^*L_2$ is $1/2$.
 \end{proof}
 One might ask whether this is the best we can do in the K\"{a}hler case with this technique. Unfortunately, the answer is yes.
\begin{lemma} There is no linear combination $a\partial+b\partial^*+c\bar{\partial}+d\bar{\partial}^*$ which is a Stein-Weiss operator and such that $(b\neq 0$ and $c\neq 0)$ or $(a\neq 0$ and $d\neq 0$.   
\end{lemma}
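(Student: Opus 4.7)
The plan is to suppose $L = a\partial+b\partial^*+c\bar\partial+d\bar\partial^* = \Pi\circ\nabla$ for some orthogonal projection $\Pi$ onto a $U(n)$-subrepresentation $F\subset T^*_{\mathbb C}M\otimes\Lambda^{p,q}$ and then derive a contradiction from each of the two forbidden patterns on $(a,b,c,d)$.

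I would first read off from the proof of Theorem~\ref{Kahler} that the four intertwiners $\theta^{\partial},\theta^{\partial^*},\theta^{\bar\partial},\theta^{\bar\partial^*}$ split cleanly according to the orthogonal decomposition $T^*_{\mathbb C}M = T^{*1,0}\oplus T^{*0,1}$: the images of $\theta^{\partial}$ and $\theta^{\bar\partial^*}$ sit inside $T^{*1,0}\otimes\Lambda^{p,q}$, whereas the images of $\theta^{\bar\partial}$ and $\theta^{\partial^*}$ sit inside $T^{*0,1}\otimes\Lambda^{p,q}$. A coefficient-rigidity argument modelled on Lemma~\ref{linalg2}, transported to the $U(n)$ setting by complexifying the representations, then forces on each nonzero summand the specific scalars tying $L$ to $\Pi\circ\nabla$, namely $a=\pm 1/\sqrt{p+1}$, $b=\pm 1/\sqrt{n-p+1}$, $c=\pm 1/\sqrt{q+1}$, $d=\pm 1/\sqrt{n-q+1}$.

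The decisive requirement is that $\Pi$ be an \emph{orthogonal} projection, which means the $\theta$-summands of $F$ corresponding to the nonzero coefficients must be pairwise Hermitian-orthogonal inside $T^*_{\mathbb C}M\otimes\Lambda^{p,q}$. Summands lying in distinct $(1,0)$ and $(0,1)$ halves are orthogonal for free, and this is exactly how $L_1 = a\partial+b\partial^*$ and $L_2 = c\bar\partial+d\bar\partial^*$ of Theorem~\ref{Kahler} survive the obstruction. The two forbidden patterns are the ones that trap two summands into the same half: $(a,d)\neq 0$ places $\theta^{\partial}(\Lambda^{p+1,q})$ and $\theta^{\bar\partial^*}(\Lambda^{p,q-1})$ both inside $T^{*1,0}\otimes\Lambda^{p,q}$, whereas $(b,c)\neq 0$ places $\theta^{\partial^*}(\Lambda^{p-1,q})$ and $\theta^{\bar\partial}(\Lambda^{p,q+1})$ both inside $T^{*0,1}\otimes\Lambda^{p,q}$.

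The final step, which is also where I expect the real work, is to verify that in both forbidden situations the two relevant $\theta$-images are genuinely not orthogonal. For the case $(b,c)\neq 0$ I would exhibit an explicit nonzero Hermitian pairing using the K\"ahler form: writing $\omega_K = \sum_i \bar e_i\otimes e_i$, the assignment $\eta\mapsto\omega_K\wedge\eta$ is a nonzero $U(n)$-equivariant map $\Lambda^{p-1,q}\to\Lambda^{p,q+1}$ (exhibiting a shared $U(n)$-irreducible summand as soon as $p,q\geq 1$), and a direct computation of $\langle\theta^{\partial^*}(\eta),\theta^{\bar\partial}(\omega_K\wedge\eta)\rangle$ inside $T^{*0,1}\otimes\Lambda^{p,q}$ gives a nonzero multiple of $\|\eta\|^2$. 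The case $(a,d)\neq 0$ is treated identically, now using $\omega_K\wedge\colon\Lambda^{p,q-1}\to\Lambda^{p+1,q}$ inside $T^{*1,0}\otimes\Lambda^{p,q}$. The subtlety is purely combinatorial---tracking signs in the $\theta$-formulas and the pairings between the $\bar e_i$ and the $e_j$---but the outcome is the required contradiction to the orthogonality of $\Pi$.
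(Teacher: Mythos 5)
Your argument is essentially the paper's: the paper likewise invokes the complex version of Lemma \ref{linalg2} to force the candidate intertwiners to agree with $\theta^{\partial},\theta^{\partial^*},\theta^{\bar\partial},\theta^{\bar\partial^*}$ up to a unimodular factor, and then observes that the images of $\theta^{\partial^*}$ and $\theta^{\bar\partial}$ (and, symmetrically, of $\theta^{\partial}$ and $\theta^{\bar\partial^*}$) are not orthogonal --- precisely your obstruction, with your $(1,0)$/$(0,1)$ bookkeeping and explicit pairing merely supplying details the paper leaves implicit. One small correction: $\langle\theta^{\partial^*}(\eta),\theta^{\bar\partial}(\omega_K\wedge\eta)\rangle$ is a nonzero multiple of $\|\omega_K\wedge\eta\|^2$ rather than of $\|\eta\|^2$ (it can vanish for particular nonzero $\eta$ once $p+q>n$), but since it is nonzero for some $\eta$ whenever both operators are nontrivial, the non-orthogonality, and hence your proof, stands.
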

\begin{proof} The first thing to note is that the images of $\theta^{\partial^*}$ and $\theta^{\bar{\partial}}$ are not orthogonal. On the other hand by the complex version of Lemma \ref{linalg2} the maps $\theta^{\partial^*}$ and $\theta^{\bar{\partial}}$ are uniquely determined by the symbols of the operators $\partial^*$ and $\bar{\partial}$ up to a constant of modulus $1$. 
\end{proof}

Notice that the inequalities in Theorem \ref{Kahler} hold without any global condition on $\omega$ or $M$. We would like now to reprove the result from  \cite{KoLiZh,La} about $1$-forms mentioned in the introduction using Theorem \ref{Kahler}.   We have almost everything except for the fact that a harmonic $1$-field splits into a harmonic $(1,0)$-part and $(0,1)$-part.  The following proposition,  which is well known in the case of compact K\"{a}hler manifolds takes care of that (compare with Proposition $1$ in \cite{Ya}).
\begin{prop}\label{Yau}  Let $M$ be a complete K\"{a}hler manifold and let  $\omega\in\Gamma(\Lambda^kT^*M)$  be $L^2$ integrable. The following equations are equivalent.
\begin{enumerate} 
\item[(1)]  $(d+d^*)\omega=0$;
\item[(2)]  $(\partial+\partial^*)\omega=0$;
\item[(3)]  $ (\bar{\partial}+\bar{\partial}^*)\omega=0$.
\end{enumerate}
\end{prop}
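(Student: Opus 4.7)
The plan is to route the equivalence through the three Laplacians, compare them via the K\"{a}hler identities, and then close the loop by a Gaffney--Andreotti--Vesentini cutoff argument that uses completeness together with the $L^2$ hypothesis on $\omega$.

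First I would observe that each first order equation squares to the corresponding Laplacian: $(d+d^*)^2=\Delta_d$, $(\partial+\partial^*)^2=\Delta_\partial$, and $(\bar\partial+\bar\partial^*)^2=\Delta_{\bar\partial}$. Hence any one of (1), (2), (3) immediately implies that the respective Laplacian annihilates $\omega$, and the K\"{a}hler identities $\Delta_d=2\Delta_\partial=2\Delta_{\bar\partial}$ then force all three Laplacians to kill $\omega$ simultaneously.

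To recover the first order equations from vanishing of the Laplacian, fix a basepoint $x_0\in M$ and use completeness to produce cutoffs $\chi_r\in C_c^\infty(M)$ with $\chi_r\equiv 1$ on the geodesic ball $B_r(x_0)$, $\operatorname{supp}\chi_r\subset B_{2r}(x_0)$, and $|d\chi_r|\le C/r$. For each $D\in\{d,\partial,\bar\partial\}$ the standard integration by parts against $\chi_r^2\omega$, with cross terms absorbed by Cauchy--Schwarz, yields
\[ \int_M \chi_r^2\bigl(|D\omega|^2+|D^*\omega|^2\bigr)\le 2\int_M\chi_r^2\langle\Delta_D\omega,\omega\rangle+C\int_M|d\chi_r|^2|\omega|^2. \]
With $\Delta_D\omega=0$ the first term vanishes, and the second tends to $0$ as $r\to\infty$ by the $L^2$ integrability of $\omega$, so $D\omega=D^*\omega=0$. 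Because $\omega$ has pure degree $k$, the components $D\omega$ and $D^*\omega$ live in different degrees and cannot cancel, so the conjunction $D\omega=0=D^*\omega$ is equivalent to $(D+D^*)\omega=0$. Running this argument for $D=d,\partial,\bar\partial$ closes the cycle of implications between (1), (2), (3).

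The main obstacle is the cutoff estimate for the complex operators $\partial$ and $\bar\partial$. The $d+d^*$ case is classical Gaffney, and the template carries over verbatim, but one must verify that the error terms produced by commuting $\chi_r$ past $\partial,\bar\partial$ and their adjoints have the form $\chi_r\,d\chi_r\cdot|\omega|\cdot|D\omega|$ with a constant independent of $r$, so that Cauchy--Schwarz with weight $\chi_r$ absorbs them into the main term. Once this is verified, the remainder of the proof is bookkeeping with bidegrees and the K\"{a}hler identities.
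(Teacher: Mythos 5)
Your proof is correct, and the overall strategy (pass to the Laplacians, transfer harmonicity via the K\"ahler identities $\Delta_d=2\Delta_\partial=2\Delta_{\bar\partial}$, then use completeness and the $L^2$ hypothesis through a cutoff integration by parts) is the same as the paper's. The one place where you genuinely diverge is the final analytic step. The paper uses cutoffs $\psi_\nu$ satisfying only $|d\psi_\nu|\le 1$, which yields merely a uniform bound $\int\psi_\nu^2(|\partial\omega|^2+|\partial^*\omega|^2)\le C\int|\omega|^2$, i.e.\ that $(\partial+\partial^*)\omega\in L^2$; to conclude that it actually vanishes, the paper then invokes a separate functional-analytic argument (the maximal extension of the formally self-adjoint operator $\partial+\partial^*$ on a complete manifold contains its Hilbert-space adjoint, so one may take $\eta=(\partial+\partial^*)\omega$ in the integration-by-parts identity and use $\Delta_\partial\omega=0$). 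You instead take cutoffs with the decay $|d\chi_r|\le C/r$, so that after absorbing the cross terms the error $C\int|d\chi_r|^2|\omega|^2\le (C/r)^2\|\omega\|_{L^2}^2$ tends to zero and you get $\partial\omega=\partial^*\omega=0$ in a single pass, with no appeal to essential self-adjointness. Both routes are standard and valid; yours is the more streamlined Gaffney--Andreotti--Vesentini version, while the paper's two-step variant isolates the statement ``first derivatives are $L^2$'' as an intermediate fact. The one verification you flag --- that the commutators $[\partial,\chi_r^2]$, $[\partial^*,\chi_r^2]$ are zeroth-order operators given by wedging with, respectively contracting by, $2\chi_r$ times a component of $d\chi_r$ --- is indeed routine, and your observation that $D\omega$ and $D^*\omega$ lie in total degrees $k+1$ and $k-1$ and hence cannot cancel correctly closes the equivalence with the first-order equations.
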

\begin{proof} The identities
\[ \Delta=2\Delta_{\partial}=2\Delta_{\bar{\partial}}
\]
and "Bochner technique" solve the problem in the compact case. In the non-compact case it is enough to prove that  $\Delta_{\partial}\omega=0$, $\omega\in L^2$ implies that  $(\partial+\partial^*)\omega$ is $L^2$ integrable. The other cases are completely analogous. 

Indeed, since $\partial+\partial^*$ is formally adjoint we have that the domain of the maximal extension of $\partial+\partial^*$ contains the domain of the functional analytic adjoint of (the maximal extension of) $\partial+\partial^*$, on which they coincide. Hence 
\[ \int_{M}\langle (\partial+\partial^*)\eta,\omega\rangle=\int_{M}\langle \eta, (\partial+\partial^*)\omega \rangle
\]
for all $\eta,\omega\in L^2$ such that $(\partial+\partial^*)\eta, (\partial+\partial^*)\omega \in L^2$. Taking $\eta=(\partial+\partial^*)\omega$ solves the problem.

We are now using a special collection of cut-off functions $\psi_\nu$ with the following properties: there exists    a collection of compact subsets $K_{\nu}\subset K_{\nu +1}\subset M$ such that
\begin{equation}\label{cutoff} \psi_\nu\equiv 1 \mbox{ and } |d\psi_{\nu}|\leq 1 \mbox{ on } K_{\nu} \quad \mbox{ and }\quad \supp{\psi_{\nu}}\subset K_{\nu+1}.
\end{equation} Such a collection exists on a complete manifold by Hopf-Rinow lemma (see Proposition 8.1 in \cite{De}). We have
\begin{equation} \label{Boch}0= \int_{M}\langle \psi_{\nu}^2\omega,\Delta_{\partial}\omega\rangle=\int_{M}\langle\partial(\psi_{\nu}^2\omega),\partial\omega \rangle+\int_{M}\psi_\nu^2|\partial^*\omega|^2=\end{equation}\[=2\int_{M}\langle\partial\psi_{\nu}\wedge\omega,\psi_{\nu}\partial\omega\rangle+\int_{M}\psi_\nu^2|\partial\omega|^2+\int_{M}\psi_\nu^2|\partial^*\omega|^2.
\]
It follows from here that
\[\int_{M}\psi_\nu^2|\partial\omega|^2\leq 2\left|\int_{M}\langle\partial\psi_{\nu}\wedge\omega,\psi_{\nu}\partial\omega\rangle\right|\leq 2\left(\int_{M}|\partial \psi_{\nu}\wedge\omega|^2\right)^{1/2} \left(\int_{M}|\psi_{\nu}|^2|\partial\omega|^2\right)^{1/2}\]
Combining this with (\ref{cutoff}) we get \[ \int_{M}\psi_\nu^2|\partial\omega|^2\leq 4\int_{M}|\partial \psi_{\nu}\wedge\omega|^2\leq C\int_{M}|\partial \psi_{\nu}|^2|\omega|^2 \leq C\int_{M} |\omega|^2.
\]
Therefore $\partial\omega$ is $L^2$ integrable and by (\ref{Boch}), $\partial^*\omega$ is also $L^2$ integrable.
\end{proof}
The following  simple lemma is an interesting fact on its own.
\begin{lemma} \label{sum} Let $\phi_1\in\Gamma(E_1)$ and $\phi_2\in\Gamma(E_2)$  be sections of two vector bundles, $E_1$ and $E_2$, that satisfy the Kato inequalities
\[ |d|\phi_1|| \leq\alpha_1 |\nabla^{E_1} \phi_1|\quad\quad \mbox{ and }\quad\quad |d|\phi_2|| \leq\alpha_2 |\nabla^{E_2} \phi_2|,
\]
for some constants $\alpha_1$ and $\alpha_2\leq 1$. Then $(\phi_1,\phi_2)\in\Gamma(E_1\oplus E_2)$ satisfies the inequality
\[|d|(\phi_1,\phi_2)||\leq\max{\{\alpha_1,\alpha_2\}}|\nabla^{E_1\oplus E_2}(\phi_1,\phi_2)|.
\]
\end{lemma}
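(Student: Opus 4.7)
The plan is to reduce everything to the identity $|(\phi_1,\phi_2)|^2=|\phi_1|^2+|\phi_2|^2$ for the norm on the direct sum bundle, and then chain the two Kato hypotheses together via Cauchy--Schwarz. Working away from the zero locus of $(\phi_1,\phi_2)$, I set $f:=|(\phi_1,\phi_2)|$ and differentiate $f^2=|\phi_1|^2+|\phi_2|^2$ to obtain $f\,df=|\phi_1|\,d|\phi_1|+|\phi_2|\,d|\phi_2|$, where any term corresponding to a vanishing $\phi_i$ is to be read as zero (since $|\phi_i|\,d|\phi_i|=\tfrac{1}{2}d|\phi_i|^2=\operatorname{Re}\langle\nabla^{E_i}\phi_i,\phi_i\rangle$ extends continuously by zero).

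Next I would take pointwise norms of $1$-forms on both sides and apply the triangle inequality to get $f\,|df|\leq |\phi_1|\,|d|\phi_1||+|\phi_2|\,|d|\phi_2||$. Feeding in the two Kato hypotheses and replacing $\alpha_1,\alpha_2$ by $M:=\max\{\alpha_1,\alpha_2\}$ yields $f\,|df|\leq M\bigl(|\phi_1|\,|\nabla^{E_1}\phi_1|+|\phi_2|\,|\nabla^{E_2}\phi_2|\bigr)$. The Cauchy--Schwarz inequality in $\mathbb{R}^2$, applied to the vectors $(|\phi_1|,|\phi_2|)$ and $(|\nabla^{E_1}\phi_1|,|\nabla^{E_2}\phi_2|)$, then bounds the quantity in parentheses by $f\,\sqrt{|\nabla^{E_1}\phi_1|^2+|\nabla^{E_2}\phi_2|^2}$; and the square root equals $|\nabla^{E_1\oplus E_2}(\phi_1,\phi_2)|$ because the direct--sum connection acts componentwise. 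Dividing by $f$ finishes the argument.

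The only nuisance, not really an obstacle, is that $|\phi_i|$ need not be smooth at the zeros of $\phi_i$; but as remarked above the products $|\phi_i|\,|d|\phi_i||$ extend continuously by zero there, and at points where $(\phi_1,\phi_2)$ itself vanishes the desired inequality is the trivial $0\leq 0$. The hypothesis $\alpha_i\leq 1$ is not used in the proof; it only guarantees that the output constant $\max\{\alpha_1,\alpha_2\}$ is still at most $1$, so that the conclusion qualifies as a (refined) Kato inequality in the sense of the definition given in Section~\ref{Kato const}.
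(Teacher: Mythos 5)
Your proof is correct and follows essentially the same route as the paper's: differentiate $|(\phi_1,\phi_2)|^2=|\phi_1|^2+|\phi_2|^2$, apply the triangle inequality and the two Kato hypotheses, then finish with Cauchy--Schwarz in $\mathbb{R}^2$. The extra care you take at the zero loci of $\phi_1,\phi_2$ is a welcome (if implicit in the paper) refinement, but it does not change the argument.
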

\begin{proof} \[    2|(\phi_1,\phi_2)|\cdot |d|(\phi_1,\phi_2)||=|d\left (|(\phi_1,\phi_2)|^2\right)|=|d|\phi_1|^2+d|\phi_2|^2|\leq 2|\phi_1|\cdot |d|\phi_1||+2|\phi_2|\cdot |d|\phi_2||\leq\] 
\[\leq 2\max{\{\alpha_1,\alpha_2\}}(|\phi_1|\cdot |\nabla^{E_1}\phi_1|+|\phi_2|\cdot|\nabla^{E_2}\phi_2|)\leq 2\max{\{\alpha_1,\alpha_2\}}|(\phi_1,\phi_2)|\cdot|\nabla^{E_1\oplus E_2}(\phi_1,\phi_2)|.\]
\end{proof}
Combining Theorem \ref{Kahler}, Proposition \ref{Yau} and Lemma \ref{sum} we get the following result that  appears in  \cite{KoLiZh} and \cite{La}.
\begin{corollary}\label{refineK} Let $\omega$ be a harmonic form of degree $1$ or $2n-1$ which is $L^2$ integrable on a complete K\"{a}hler manifold $M$ of complex dimension $n$. Then
\[ |d|\omega||\leq \frac{1}{\sqrt{2}}|\nabla \omega|.
\]
\end{corollary}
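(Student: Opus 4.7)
The plan is to combine all three ingredients announced just before the statement: Proposition \ref{Yau} converts the harmonicity of $\omega$ into the vanishing of $\partial+\partial^*$ and $\bar\partial+\bar\partial^*$ applied to $\omega$; bidegree decomposition then shows each Dolbeault component of $\omega$ is itself a harmonic field; Theorem \ref{Kahler} handles each component; and Lemma \ref{sum} glues the two estimates.

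In detail, I would treat the degree $1$ case first. Since $M$ is complete and $\omega\in L^2$, Proposition \ref{Yau} gives $(\partial+\partial^*)\omega=0$ and $(\bar\partial+\bar\partial^*)\omega=0$. Decompose $\omega=\omega^{1,0}+\omega^{0,1}$ (note that the pointwise orthogonality of this decomposition guarantees $\omega^{1,0},\omega^{0,1}\in L^2$). The operators $\partial,\partial^*,\bar\partial,\bar\partial^*$ shift bidegree by $(1,0),(-1,0),(0,1),(0,-1)$, so the identities
\[(\partial+\partial^*)\omega=\partial\omega^{1,0}+\partial^*\omega^{1,0}+\partial\omega^{0,1}=0,\qquad (\bar\partial+\bar\partial^*)\omega=\bar\partial\omega^{1,0}+\bar\partial\omega^{0,1}+\bar\partial^*\omega^{0,1}=0\]
split into components of four distinct bidegrees, each of which must vanish separately (keeping in mind $\partial^*\omega^{0,1}=0$ and $\bar\partial^*\omega^{1,0}=0$ automatically, since $\Lambda^{-1,1}=\Lambda^{1,-1}=0$). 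Thus $\omega^{1,0}$ and $\omega^{0,1}$ are harmonic fields in their respective bidegrees.

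Now I apply Theorem \ref{Kahler} to each component. For $\omega^{1,0}$ we have $q=0$, and for $\omega^{0,1}$ we have $p=0$, so the theorem gives $\alpha^2=1/2$ in both cases, i.e.
\[|d|\omega^{1,0}||\leq \tfrac{1}{\sqrt{2}}|\nabla\omega^{1,0}|,\qquad |d|\omega^{0,1}||\leq \tfrac{1}{\sqrt{2}}|\nabla\omega^{0,1}|.\]
Lemma \ref{sum} applied to the pair $(\omega^{1,0},\omega^{0,1})\in\Gamma(\Lambda^{1,0}T^*M\oplus\Lambda^{0,1}T^*M)$, combined with the pointwise identities $|\omega|^2=|\omega^{1,0}|^2+|\omega^{0,1}|^2$ and $|\nabla\omega|^2=|\nabla\omega^{1,0}|^2+|\nabla\omega^{0,1}|^2$, yields the desired $|d|\omega||\leq\tfrac{1}{\sqrt{2}}|\nabla\omega|$.

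For the degree $2n-1$ case the cleanest route is to invoke Lemma \ref{star Hodge}: $*\omega$ is then a degree $1$ harmonic field, $L^2$ integrable, with $|*\omega|=|\omega|$ and $|\nabla *\omega|=|\nabla\omega|$ (since $*$ is a parallel isometry), so the bound reduces to the degree $1$ case already treated. Alternatively, one could run the same bidegree argument with components $(n,n-1)$ and $(n-1,n)$, where $p=n$ and $q=n$ respectively force $\alpha^2=1/2$ in Theorem \ref{Kahler}. There is no real obstacle here; the only step requiring a bit of care is the bidegree bookkeeping showing the components are harmonic fields, and that is just a matter of matching types.
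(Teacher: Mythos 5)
Your argument is correct and is exactly the route the paper intends: the corollary is stated there as an immediate combination of Proposition \ref{Yau}, the bidegree splitting of a harmonic $1$-field into harmonic $(1,0)$- and $(0,1)$-fields, Theorem \ref{Kahler} with $p=0$ or $q=0$, and Lemma \ref{sum}, with the degree $2n-1$ case handled by Hodge duality (Lemma \ref{star Hodge}). You have simply supplied the details the paper leaves implicit, and they all check out.
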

The following result characterizes the  equality case in Theorem \ref{Kahler} in the simplest of the situations.
\begin{prop} Let $\omega\in\Gamma(\Lambda^{0,q})$ and $\eta\in\Gamma(\Lambda^{p,0})$ be  harmonic fields on a K\"{a}hler manifold of dimension $n$. Then a necessary condition for equality for $\omega$ and $\eta$ in Theorem \ref{Kahler} is the existence of  real one-forms $\xi,\gamma\in\Gamma(T^*M)$  such that
\[\nabla\omega=\xi^{0,1}\otimes \omega \quad\quad\quad\quad \nabla\eta=\gamma^{1,0}\otimes \eta.
\]
If $\omega\in\Gamma(\Lambda^{n,q})$ and $\eta\in\Gamma(\Lambda^{p,n})$ then a necessary condition for equality is
\[ \nabla\omega=\xi^{1,0}\otimes  \omega\quad\quad\quad\quad \nabla\eta=\gamma^{0,1}\otimes \eta.
\]
\end{prop}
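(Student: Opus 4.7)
The plan is to trace the chain of inequalities underlying Theorem \ref{Kahler} and determine where each must be an equality. For a harmonic field of type $(0,q)$ (resp.\ $(p,0)$, $(n,q)$, $(p,n)$), one of the two first-order operators making up the relevant $L_1$ or $L_2$ from \eqref{UnSW} vanishes by degree, so the effective Stein--Weiss operator collapses to $\partial$ (resp.\ $\bar\partial$, $\partial^*$, $\bar\partial^*$). In each case $L^*L = \tfrac12 \Delta$, giving ellipticity constant $\epsilon = 1/2$ and realizing the bound $\alpha^2 = 1/2$.

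Take $\omega \in \Gamma(\Lambda^{0,q})$ and $L = \partial$. Inspection of the injection $\theta^{\partial}$ from the proof of Theorem \ref{Kahler}, specialized to $p=0$, shows that its image inside $T^*_{\mathbb{C}} M \otimes \Lambda^{0,q}$ is the entire type component $T^{1,0}M^* \otimes \Lambda^{0,q}$; hence $F^{\perp} = T^{0,1}M^* \otimes \Lambda^{0,q}$. For a real unit one-form $\xi_0$ one has $|\xi_0^{1,0}|^2 = |\xi_0^{0,1}|^2 = \tfrac12$, so
\[ \Pi^{\perp}(\xi_0 \otimes \omega) = \xi_0^{0,1}\otimes\omega, \qquad |\Pi^{\perp}(\xi_0 \otimes \omega)|^2 = \tfrac12 |\omega|^2 = (1-\epsilon)|\omega|^2. \]
Thus the ellipticity estimate used inside Proposition \ref{constell} is automatically saturated, and the only remaining source of slack is the Cauchy--Schwarz step in Lemma \ref{elliptKato}.

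Forcing equality in that step makes $\nabla\omega$ a nonnegative real multiple $c$ of $\Pi^{\perp}(\xi_0 \otimes \omega) = \xi_0^{0,1}\otimes\omega$; the nonnegativity is what remains of the inequality $\Real\langle u,v\rangle \leq |u|\,|v|$. Setting $\xi := c\xi_0$, which is still a real one-form, yields $\nabla\omega = \xi^{0,1}\otimes\omega$, as claimed. The three remaining cases follow the same template: $F^{\perp}$ is replaced by the appropriate type component of $T^*_{\mathbb{C}} M \otimes \Lambda^{p,q}$, namely the one \emph{not} hit by the symbol of the surviving first-order operator, and the corresponding $\xi^{0,1}$, $\gamma^{1,0}$, $\xi^{1,0}$, or $\gamma^{0,1}$ appears accordingly.

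The main technical point is recognizing that $F$ really is an entire type component at these extremal bidegrees (each $\theta$ restricted to the relevant degree is an isomorphism onto a single-type subspace), and verifying that the proportionality constant from the Cauchy--Schwarz equality can be taken real and nonnegative so that $\xi$ and $\gamma$ are genuinely real one-forms rather than complex.
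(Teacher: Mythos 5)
Your argument is correct and follows essentially the same route as the paper: trace equality back to the Cauchy--Schwarz step in Lemma \ref{elliptKato}, observe that at these extremal bidegrees the projection $\Pi$ associated to the surviving operator sends $\xi_0\otimes\omega$ to a single type component (e.g.\ $\xi_0^{1,0}\otimes\omega$ for $(0,q)$), so that $\Pi^{\perp}(\xi_0\otimes\omega)$ is the complementary component, and set $\xi$ equal to a real multiple of $\xi_0$. Your added remark that the ellipticity bound is automatically saturated here is a correct refinement but not needed for the necessity claim.
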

\begin{proof} The equality in Lemma \ref{elliptKato} happens when there exists a real valued function $f:M\rightarrow\mathbb{R}$ such that
\[ \nabla\omega=f\Pi^{\perp}(\xi_0\otimes\omega)= f\xi_0\otimes\omega-\Pi(f\xi_0\otimes \omega).
\]
Let $\xi:=f\xi_0$. This is a real $1$-form. In the case $\omega\in \Gamma(\Lambda^{0,q})$, the operator $\Pi$ is nothing else but $\xi\otimes \omega\mapsto \xi^{1,0}\wedge\omega$ where the element $\xi^{1,0}\wedge\omega$ is a section of $T^*_{\mathbb{C}}M\otimes\Lambda^{0,q}$ via the map
\[ \theta^{\partial}(\xi^{1,0}\wedge\omega)=\xi^{1,0}\otimes \omega.
\]
We have therefore the first claim and the second is entirely analogous. The third claim follows from the observation that
\[\sum_{i=1}^{n}(0,e_i)\otimes \bar{e}_i\wedge \iota_{(\xi^{0,1})^*}\omega=\xi^{0,1}\otimes\omega
\]
and the last one is similar to this.
\end{proof}

\end{document}